\newcommand{\R}{\mathbb{R}}
\newtheorem{theorem}{Theorem}[section]
\newtheorem{lemma}{Lemma}[section]
\newtheorem{corollary}{Corollary}[section]
\newtheorem{remark}{Remark}[section]
\newcommand{\p}{\partial}
\newcommand{\bb}{\begin{equation}}
\newcommand{\ee}{\end{equation}}
\newcommand{\ba}{\begin{array}}
\newcommand{\ea}{\end{array}}
\newcommand{\f}{\frac}
\newcommand{\ds}{\displaystyle}
\newcommand{\al}{\alpha}
\newcommand{\be}{\beta}
\newcommand{\sign}{\text{sgn}\,}
\numberwithin{equation}{section}
\newcommand{\ad}{\text{ad}}
\newcommand{\Ad}{\text{Ad}}
\newcommand{\arctanh}{\text{arctanh}}
\title{Structural and qualitative properties of a geometrically integrable equation}
\author{
Nazime Sales Filho$^{1,2}$\thanks{sales.nazime@gmail.com}\,\,\, and\,\,\, Igor~Leite~Freire$^{3}$\thanks{igor.freire@ufabc.edu.br or igor.leite.freire@gmail.com}\\
$^1$Programa de Pós-Graduação em Matemática, \\
 Universidade Federal do ABC, \\Avenida dos Estados, $5001$, Bairro Bangu,
$09.210-580$,\\
Santo Andr\'e, SP - Brasil\\
$^2$Faculdade de Engenharia,\\Universidade Federal de Mato Grosso,\\Av. Fernando Corrêa da Costa, nº 2367, Bairro Boa Esperança, 78060-900\\ Cuiabá, MT - Brasil\\
$^3$Centro de Matem\'atica, Computa\c{c}\~ao e Cogni\c{c}\~ao\\
Universidade Federal do ABC\\
Santo Andr\'e, Brazil}
\begin{document}
\maketitle
\begin{abstract}
Lie symmetries of a Novikov geometrically integrable equation are found and group-invariant solutions are obtained. Local conservation laws up to second order are established as well as their corresponding conserved quantities. Sufficient conditions for the $L^1$ norm of the solutions to be invariant are presented, as well as conditions for the existence of positive solutions. Two demonstrations for unique continuation of solutions are given: one of them is just based on the invariance of the $L^1$ norm of the solutions, whereas the other is based on well-posedness of Cauchy problems. Finally, pseudo-spherical surfaces determined by the solutions of the equation are studied: all invariant solutions that do not lead to pseudo-spherical surfaces are classified and the existence of an analytic metric for a pseudo-spherical surface is proved using conservation of solutions and well-posedness results.
\end{abstract}
{\bf MSC classification 2010:} 35A01, 74G25, 37K40, 35Q51.

\keywords{Conservation laws \and Novikov equation \and Unique Continuation \and Pseudo-spherical surfaces}

\newpage
\section{Introduction}\label{sec1}

In \cite{nov} Novikov classified equations of the type
$$u_t-u_{txx}=F(u,u_x,u_{xx},u_{xxx}),$$
for some function $F$, having an infinite hierarchy of quasi-local higher symmetries. Among the members in his classification are the Camassa-Holm (CH) \cite{chprl}, Degasperis-Procesi (DP) \cite{depro}, and Novikov  \cite{nov,hone2} equations. 

The aforementioned equations are probably the most well known ones falling into Novikov's classification. However, they are not the {\it only} and some of them seem to have very interesting properties shared with them.

In the present work we are interested in the equation
$$
u_t-u_{txx}=\p_x(2-\p_x)(1+\p_x)u^2,
$$
or its equivalent (and more convenient) form
\bb\label{1.0.1}
u_{t}-u_{txx}=4uu_{x}+2u_{x}^{2}+2uu_{xx}-6u_{x}u_{xx}-2uu_{xxx},
\ee
also discovered by Novikov \cite{nov}.

Our attention was attracted to \eqref{1.0.1} due to the fact that it describes pseudo-spherical surfaces (PSS) \cite{rodrigo}, where it was shown that if $u$ is a smooth solution of \eqref{1.0.1}, then
\bb\label{1.0.2}
\omega_1=f_{11}dx+f_{12}dt,\,\,\omega_2=f_{21}dx+f_{22}dt,\,\,\omega_3=f_{31}dx+f_{32}dt,
\ee
where
\bb\label{1.0.3}
\begin{array}{llll}
f_{11}=u-u_{xx}, &f_{12}=\ds{2u(u-u_{xx})+\frac{4}{m_{1}}uu_x-2u_x^{2}-2u^{2}},\\
 f_{21}=\mu (u-u_{xx})\pm m_{1}\sqrt{1+\mu^{2}},&
f_{22}=\ds{\mu\left[2u(u-u_{xx})+\frac{4}{m_{1}}uu_x-2u_x^{2}-2u^{2}\right]},\\
f_{31}=\pm\sqrt{1+\mu^{2}}(u-u_{xx})+m_{1}\mu,&f_{32}=\pm\ds{\sqrt{1+\mu^{2}}\left[2u(u-u_{xx})+\frac{4}{m_{1}}uu_x-2u_x^{2}-2u^{2}\right]},
\end{array}
\ee
$m_1\in\{-2,1\}$, and $\mu\in\R$, satisfy the compatibility conditions
\bb\label{1.0.4}
d\omega_1=\omega_3\wedge\omega_2,\quad d\omega_2=\omega_1\wedge\omega_3,\quad d\omega_3=\omega_1\wedge\omega_2,
\ee
for a PSS with constant Gaussian curvature ${\cal K}=-1$ as long as $\omega_1\wedge\omega_2\neq0$. Moreover, it also implies that \eqref{1.0.1} is the compatibility condition for the zero curvature representation
$$
\p_tX-\p_xT+[X,T]=0,
$$
where $[X,T]=XT-TX$ is the usual commutator between matrices, and 
$$
X=\f{1}{2}\begin{pmatrix}
f_{21}& f_{11}-f_{31}\\
\\
f_{11}+f_{31} & -f_{21}
\end{pmatrix},\quad\text{and}\quad
T=\f{1}{2}\begin{pmatrix}
f_{22}& f_{12}-f_{32}\\
\\
f_{12}+f_{32} & -f_{22}
\end{pmatrix}.
$$

One of the purposes of this work is the investigation of Lie point symmetries \cite{hydon,i1,olverbook} of \eqref{1.0.1}. Having the invariance group, we can then look for invariant solutions and then, using the results from \cite{rodrigo}, we can construct explicit pseudo-spherical surfaces, since
\bb\label{1.0.5}
\ba{lcl}
g&=&\ds{\Big[(u-u_{xx})^2+\Big(\mu (u-u_{xx})\pm m_1\sqrt{1+\mu^2}\Big)^2\Big]dx^2}\\
\\
&+&2\ds{\Big(2u(u-u_{xx})+\psi\Big)\Big[(1+\mu^2)(u-u_{xx}) \pm m_1\mu\sqrt{1+\mu^2}\Big]dxdt}\\
\\
&+&\ds{(1+\mu^2)\Big(2u(u-u_{xx})+\psi\Big)^2dt^2},
\ea
\ee
where $\psi=4(uu_x)/m_1-2u_x^2-2u^2$, $m_1\in\{-2,1\}$, and $\mu\in\R$, is the metric determined by \eqref{1.0.2}--\eqref{1.0.4}.

A natural and related point to be addressed jointly with symmetries is the existence of conservation laws and conserved quantities. In \cite{rodrigo} it was shown a hierarchy of conservation laws for \eqref{1.0.1}. They are essentially divided in two categories: the trivial ones (see \cite[Theorems 4 and 5]{rodrigo}) or the non-local ones (see \cite[Equations (21)--(23)]{rodrigo}. The latter reinforces the fact that the equation is integrable, while the former does not bring any relevant information for the solutions of the equation. In our work we follow a different direction and use techniques related to group analysis \cite{chev-2007,chev-2010-1,chev-2010-2,chev-2014,chev-2017} to establish local conservation laws for \eqref{1.0.1}.

The paper can be summarised as follows.
\begin{itemize}
    \item In section \ref{sec2} we find the Lie point symmetries of the equation, its optimal system and some group invariant solutions;
    \item In section \ref{sec3} we classify all local conservation laws of equation \eqref{1.0.1} up to second order derivatives;
    \item In section \ref{sec4} we use the conservation laws obtained to make an in-depth exploration about qualitative aspects of the solutions of the equation. We show that depending on initial conditions, solutions of \eqref{1.0.1} conserves the $L^1(\R)$ norm, as well as a sort of exponentially weighted $L^1(\R)$ norm. We also prove a unique continuation result for the solutions of the equation.
    \item In section \ref{sec5} we shed light on geometric aspects of the equation.  We show that some of the invariant solutions of the equation correspond to non-generic solutions, that is, they are very peculiar non-trivial solutions of the equation such that \eqref{1.0.5} does not provide a metric for a pseudo-spherical surface. On the other hand, from some invariant solutions corresponding to generic solutions of \eqref{1.0.1} we construct explicit metrics for the corresponding pseudo-spherical surface. In addition, we also prove the existence of solutions leaving their $L^1(\R)-$norm invariant and giving rise to analytical PSS surfaces whose metrics degenerate as $|x|\rightarrow\infty$.
    \item We discuss our results in section \ref{sec6}, whereas our conclusions are given in section \ref{sec7}. 
\end{itemize}

\section{Lie point symmetries and invariant solutions}\label{sec2}

A Lie point symmetry of a given system of differential equations\footnote{An equation here is seen as a system of a single equation.} is a continuous group of transformations mapping solutions of the system into another solutions of the same equations. It is an intrinsic property of the equation and doubtless can be seen as the equations' DNA.

A Lie point symmetry is the flux of a certain differential operator, its generator. This is somewhat a duality of a (local) Lie group of transformations and its corresponding Lie algebra. Therefore, there is a one-to-one correspondence between a symmetry and its generator and, in practical terms, when one is interested in finding symmetries, usually one looks to their generators.

We will not go further in details about symmetries, but we guide the reader for the references \cite{2ndbook,bk,hydon,i1,olverbook}, where different perspectives are presented in wide and deep ways.

For equation \eqref{1.0.1}, a Lie point symmetry is an operator of the form
\bb\label{2.0.1}
X=\tau(t,x,u)\p_t+\xi(t,x,u)\p_x+\eta(t,x,u)\p_u.
\ee

Since \eqref{1.0.1} is a third order equation, we need to find the third order prolongation of \eqref{2.0.1} and, after this, impose the invariance condition. This will give us an over-determined system of linear partial differential equations to be solved for the unknown $\tau$, $\xi$ and $\eta$. For further details, see \cite[Chapter 2]{olverbook}. Solving it we prove our first result:
\begin{theorem}\label{teo2.1}
Let \eqref{2.0.1} be a Lie point symmetry generator of the equation \eqref{1.0.1}. Then it is a linear combination of the operators \bb\label{2.0.2}
X_1=\p_x,\quad X_2=\p_t,\quad X_3=t\p_t-u\p_u.
\ee
\end{theorem}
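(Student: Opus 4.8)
The plan is to apply the standard Lie symmetry algorithm: compute the third prolongation of the generator $X$ in \eqref{2.0.1}, apply it to the left-hand side of \eqref{1.0.1} minus the right-hand side, restrict to the solution manifold, and collect the resulting determining equations. Concretely, I would write $X^{(3)}=X+\eta^x\p_{u_x}+\eta^t\p_{u_t}+\eta^{xx}\p_{u_{xx}}+\eta^{tx}\p_{u_{tx}}+\eta^{xxx}\p_{u_{xxx}}$ with the prolongation coefficients $\eta^J=D_J(\eta-\tau u_t-\xi u_x)+\tau u_{Jt}+\xi u_{Jx}$ given by the usual recursion (see \cite[Chapter 2]{olverbook}), and impose $X^{(3)}\big(u_t-u_{txx}-4uu_x-2u_x^2-2uu_{xx}+6u_xu_{xx}+2uu_{xxx}\big)=0$ modulo \eqref{1.0.1}.

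First I would use \eqref{1.0.1} to eliminate one of the highest derivatives, say $u_{txx}$, so that the surviving jet coordinates are functionally independent. After substitution, the invariance condition becomes a polynomial identity in the remaining derivative coordinates $u_x,u_t,u_{xx},u_{tx},u_{xxx},\dots$; since $\tau,\xi,\eta$ depend only on $(t,x,u)$, the coefficient of each independent monomial in these derivatives must vanish separately. This \emph{splitting} produces the overdetermined linear system for $\tau,\xi,\eta$. I expect the leading-order coefficients (those multiplying the highest powers of $u_{xxx}$, $u_{tx}$, and mixed products) to force $\tau=\tau(t)$ and $\xi=\xi(x,t)$ quickly, and then to force $\tau_{tt}=0$, $\xi_x=$ const, and $\xi_t=0$, while $\eta$ is pinned down to be linear in $u$.

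Solving the reduced system, I anticipate $\tau=c_2+c_3 t$, $\xi=c_1$, and $\eta=-c_3 u$, for arbitrary constants $c_1,c_2,c_3$. Reading off the coefficients of $c_1,c_2,c_3$ gives precisely the three generators $X_1=\p_x$, $X_2=\p_t$, $X_3=t\p_t-u\p_u$ of \eqref{2.0.2}, establishing that every Lie point symmetry generator is a linear combination of these three. To confirm closure (consistency with the claim that these span the symmetry algebra), I would note that $[X_1,X_2]=0$, $[X_1,X_3]=0$, and $[X_2,X_3]=X_2$, so the three operators indeed form a three-dimensional Lie algebra.

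The main obstacle is the sheer bookkeeping: the prolongation coefficients $\eta^{xxx}$ and $\eta^{txx}$ are long expressions, and after substituting \eqref{1.0.1} one must correctly identify which monomials in the jet variables are independent before splitting. A subtle point is that the nonlinear terms $2uu_{xxx}$ and $6u_xu_{xx}$ can conspire to cancel contributions, so I would be careful not to over-split or prematurely discard coefficients that vanish only on the solution surface. Verifying the final integration constants satisfy \emph{all} the determining equations simultaneously — rather than a convenient subset — is the step where an error would most likely hide, and it is where I would double-check by back-substituting the candidate $\tau,\xi,\eta$ into the full invariance condition.
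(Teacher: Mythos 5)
Your proposal follows exactly the route the paper takes: the standard third-order prolongation of \eqref{2.0.1}, the invariance condition modulo \eqref{1.0.1} (eliminating $u_{txx}$), and the splitting into an overdetermined linear system whose solution $\tau=c_2+c_3t$, $\xi=c_1$, $\eta=-c_3u$ yields precisely the span of $X_1,X_2,X_3$ — the paper itself omits this lengthy computation and verifies it with symbolic packages. Your anticipated determining-equation outcome and the closure check $[X_2,X_3]=X_2$ are both consistent with the paper's Table \ref{tab1}, so the proposal is correct and essentially identical in approach.
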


Currently there are several packages available for finding Lie symmetries of differential equations, {\it e.g.}, see \cite{chev-2007,chev-2010-1,stelios1,stelios2}. We used these facilities to check the result of Theorem \ref{teo2.1} and for this reason we omit the lengthy calculation needed for its demonstration.

Theorem \ref{teo2.1} says that the symmetries of \eqref{1.0.1} are the same as the CH and DP equations (for instance, see \cite{clark}), showing a first common property shared by them.

We now find the optimal systems of sub-algebras {\it à la} Olver \cite[page 203]{olverbook}, see also \cite[chapter 10]{hydon}, that consists essentially in the classification of the orbits of the adjoint representation of the Lie point symmetry generators and then simplify it as much as possible.

\begin{table}[h!]
	\caption{Commutator table of the generators \eqref{2.0.2}. The $(i,j)$-th entry means $[X_i,X_j]$, while $i$ and $j$ indicate line and column, respectively.}\label{tab1}
	\begin{center}
		\begin{tabular}{c|rrc}
		\toprule
		$[X_i,X_j]$	& $X_1$ & $X_2$ & $X_3$ \\
		\midrule
			$X_1$ & 0 &  0 &  0 \\
			$X_2$ & 0  & 0  & $X_2$  \\
			$X_3$ & 0  & -$X_2$ & 0 \\
		\bottomrule
		\end{tabular}
	\end{center}
\end{table}

We recall that if $X$ is a vector field of a Lie algebra $\frak{g}$, then the one-parameter group of transformations generated by $X$ is $\exp(\epsilon X)$ and the corresponding one-parameter group of adjoint transformations is
$$
\ad\, X\Big|_{Y}=\f{d}{d\epsilon}\Big|_{\epsilon=0} \Ad(\exp(\epsilon X))Y,\quad Y\in\frak{g},
$$
see \cite[page 200]{olverbook}. Moreover, we have $\ad\,X\Big|_Y=-[X,Y]$, see \cite[Proposition 3.8]{olverbook}, and
\bb\label{2.0.3}
\Ad (\exp(\epsilon X))Y=\sum_{n=0}^\infty \f{\epsilon^n}{n!}(\ad X)^n(Y)=Y-\epsilon[X,Y]+\f{\epsilon^2}{2}[X,[X,Y]]-\cdots,
\ee
see \cite[page 202]{olverbook}. Then, the orbits of the adjoint representation can be found from \eqref{2.0.3}. Calculating \eqref{2.0.3} for each generator in \eqref{2.0.2} and using Table \ref{tab1}, we obtain the adjoint classification given in Table \ref{tab2}.
\begin{table}[h!]
	\caption{Adjoint representation of the Lie point symmetry generators of \eqref{1.0.1}. The entry $(i,j)$ means line $i$ and column $j$}\label{tab2}
	\begin{center}
		\begin{tabular}{c|rcc}
			\toprule
			$\Ad(\exp(\varepsilon X_i))X_j$	& $X_1$ & $X_2$ & $X_3$ \\
			\midrule
			$X_1$ & $X_1$ &  $X_2$ &  $X_3$ \\
			$X_2$ & $X_1$  & $X_2$  & $X_3-\varepsilon X_2$  \\
			$X_3$ & $X_1$  & $e^{\varepsilon}X_2$ & $X_3$ \\
			\bottomrule
		\end{tabular}
	\end{center}
\end{table}
 
Let $X=a_1X_1+a_2X_2+a_3X_3$ and consider $\Ad(\exp (\varepsilon X_2))X$. Suppose $a_3\neq0$. Without loss of generality, we may assume that $a_3=1$, meaning that $X=a_1X_1+a_2X_2+X_3$. Then
	\begin{equation*}
		\begin{aligned}
		\tilde{X}&= Ad(exp(\varepsilon X_2))X\\
		&=a_1Ad(exp(\varepsilon  X_2))X_1+a_2Ad(exp(\varepsilon X_2))X_2+Ad(exp(\varepsilon X_2))X_3\\
		&=a_1X_1+(a_2-\varepsilon)X_2+X_3.
		\end{aligned}
	\end{equation*}
Therefore, as long as $\varepsilon=a_2$ then $\tilde{X}=a_1X_1+X_3$ and we cannot proceed with further simplification (in the sense of Remark \ref{rem2.1} below). Then the generator $X=a_1X_1+a_2X_2+a_3X_3$ with $a_3\neq0$ is equivalent to $\tilde{X}=\alpha X_1+X_3$, where $\alpha\in\R$ is an arbitrary constant.
	
Let us now assume $a_3=0$. We may assume that $a_2=1$, otherwise it should be $a_2=0$ and $X=X_1$. Table \ref{tab2} tells us that we cannot simplify such generator further (again, see Remark \ref{rem2.1}).

In summary, the optimal systems of one-dimensional sub-algebras is
\begin{equation}\label{2.0.4}
	X_1=\p_x,\quad\alpha X_1+X_3=\al\p_x+t\p_t-u\p_u \quad \textrm{e}\quad cX_1+X_2=c\p_x+\p_t.
\end{equation}

\begin{remark}\label{rem2.1}
We observe that if $\al=0$, then $\al X_1+X_3$ is reduced to $X_3=t\p_t-u\p_u$. However, in case $\al\neq0$, then $(t,x,u)\mapsto(t,|\al|x,u)$ defines a change of coordinates that maps $\al X_1+X_3$ into $(|\al|/\al)X_1+X_3=\sign(\al)X_1+X_3$, since
$$
\p_x\mapsto \f{1}{|\al|}\p_x.
$$
Therefore, we could split the analysis of this case by taking $\al=0$ or $\al=\pm1$. We, however, prefer to leave $\al\in\R$ and proceed a general analysis. 
A similar observation can be done to the linear combination $c\p_x+\p_t$.

\end{remark}

Let us recall that if 
$$
X=\tau\p_t+\xi\p_x+\eta\p_u
$$
is a Lie point symmetry generator for \eqref{1.0.1}, then imposing that solutions of the system (known as {\it characteristic system})
$$
\f{dx}{\xi}=\f{dt}{\tau}=\f{du}{\eta}
$$
are solutions of \eqref{1.0.1}, it then give us solutions of \eqref{1.0.1} invariant under the flux generated by $X$.

\begin{itemize}
    \item {\bf Solutions invariant under the flux generated by $X_1$.} Such a solution is invariant under the translations $(t,x,u)\mapsto(t,x+\epsilon,u)$, meaning that $u(t,x)=\theta(t)$, for some $\theta$. Substituting such a function into \eqref{1.0.1} we conclude that 
    \bb\label{2.0.5}
    u(x,t)=a,
    \ee where $a$ is an arbitrary constant.
    
    \item {\bf Solutions invariant under the flux generated by $\al X_1+X_3$.} If $\al\neq0$, then the solution of the characteristic system gives $u=e^{-\frac{x}{\alpha}}\theta(z)$, where $z=te^{-\frac{x}{\alpha}}$ and $\theta$ is a function to be determined. From this and \eqref{1.0.1} we obtain
    
    \bb\label{2.0.6}
			\left.\begin{aligned}
				&(\alpha^3-4\alpha)\theta' -5\alpha z\theta'' -\alpha z^2\theta''' +(4\alpha^2-4\alpha-8)\theta^2 +(-2\alpha -18)z^2(\theta')^2\\
				&+(4\alpha^2-10\alpha -38)z\theta\theta' +(-2\alpha -18)z^2\theta\theta''-6z^3\theta'\theta''-2z^3\theta\theta'''=0.
			\end{aligned}\right.
	\ee

The above equation is difficult to be solved, but a simple solution can be easily obtained if $\theta$ is a constant and $\al=-1$ or $\al=2$. In this case we have
\bb\label{2.0.7}
u(t,x)=ae^{x},
\ee
if $\al=-1$, and
\bb\label{2.0.8}
u(t,x)=ae^{-x/2},
\ee
for $\al=2$. Above, $a$ denotes an arbitrary constant.

A more acute observation into the structure of \eqref{2.0.6} suggests the ansatz $\theta(z)=az^p$, $a\neq0$. If $p\neq0$ we find that $p=-(\al+1)$ and we have the solution
\bb\label{2.0.9}
u(t,x)=a\f{e^{x}}{t^{\al+1}}.
\ee

Furthermore, for $p=0$ we have the stationary solutions \eqref{2.0.7} and \eqref{2.0.8}.

On the other hand, for $\al=0$ we conclude that 
\bb\label{2.0.10}
u(t,x)=\f{\theta(x)}{t},
\ee
for some function $\theta$. Substituting the function above into \eqref{1.0.1} we obtain the ODE
$$
\theta-\theta''+4\theta\theta'+2(\theta')^2+2\theta\theta''-6\theta'\theta''-2\theta\theta'''=0.
$$

Likewise the previous case, the ODE we need to solve is complicated, but the ansatz $\theta(x)=e^{\be x}$, $\be\in\R$, gives the solution
\bb\label{2.0.11}
u(t,x)=\f{e^{x}}{t}.
\ee
		
\item {\bf Solutions invariant under the flux generated by $cX_1+X_2$.} From the characteristic system we obtain $u=\theta(z)$, where $z=x-ct$. By \eqref{1.0.1} we conclude that $\theta$ satisfies
		
\begin{equation}\label{2.0.12}
	-c\theta'+c\theta'''-4\theta\theta'-2(\theta')^2-2\theta\theta''+6\theta'\theta''+2\theta\theta'''=0.
\end{equation}
Equation \eqref{2.0.12} can be integrated, and as a result we obtain
\begin{equation}\label{2.0.13}
		-c\theta+c\theta''-2\theta^2+2(\theta')^2-2\theta\theta'+2\theta\theta''+C_1=0,
\end{equation}
where $C_1$ is a constant of integration. Multiplying \eqref{2.0.13} by $e^z$ and integrating once more, we get 
\begin{equation}\label{2.0.14}
		-ce^z\theta+ ce^z\theta'-2e^z\theta^2+2e^z\theta\theta'+C_1e^z+C_2=0.
\end{equation}
If we set $C_2=0$ in \eqref{2.0.14}, then we can find the implicit solution
\bb\label{2.0.15}
		z-\dfrac{1}{2}\ln(2\theta^2 +\alpha\theta-C_1)+\dfrac{\alpha}{\sqrt{\alpha^2 +8C_1}}\,\arctanh\left(\dfrac{\alpha+4\theta}{\sqrt{\alpha^2 +8C_1}}\right)+C=0.
\ee
On the other hand, taking $C_1=C_2=0$ into \eqref{2.0.14} we obtain $\theta=ae^{z}$ as a solution, where $a$ is an arbitrary constant, giving
\bb\label{2.0.16}
u(x,t)=a e^{x-ct}
\ee
as a solution for \eqref{1.0.1}.
\end{itemize}

\section{Conservation laws and conserved quantities}\label{sec3}

Conservation laws are frequent partners of symmetries and they are our focus in this section. Before presenting our findings, we would like to give an {\it en passant} view about conservation laws, conserved vectors and conserved quantities of differential equations. Our focus is on equations with two independent variables $(t,x)$, where $t$ is seen as time while $x$ is regarded as space.

We recall that if $u$ depends on $(t,x)$, then $A[u]$ denotes a smooth function depending on $t,\,x,\,u$ and derivatives of $u$ up to some arbitrary, but finite, order. In particular, $A[0]$ means that $u$ and its derivatives considered are all zero. For example, let $f\in C^1(\R)$ be a non-identically vanishing function and consider 
$$A[u]:=e^x\left[f(t)(2u_{x}^{2}-4uu_{x}+2uu_{xx})-f'(t)(u-u_x)\right].$$

Then $A[u]$ depends on derivatives of $u$ up to second order and as long as $u$ and its spatial derivatives vanish, then $A[u]$ also vanishes.

A {\it conservation law} for an equation with two independent variables $t$ and $x$ is a vanishing divergence on the solutions of the equation
$$
D_t C^0+D_x C^1=0,
$$
where $C^0=C^0[u]$, $C^1=C^1[u]$ and $u$ denotes a solution of the equation.  We assume that such dependence is smooth and it vanishes at $[0]$. The pair $(C^0,C^1)$ is called {\it conserved current} whereas the components $C^0$ and $C^1$ are referred as {\it conserved quantity} and {\it conserved flux}, respectively. These terms are very intuitive. In fact, let us assume that for each $t$ the solution exists, the function $x\mapsto u(t,x)$ is defined over $\R$, and both $C^0$ and $C^1$ vanish as long as $u$ and its derivatives vanish. Integrating the divergence above with respect to $x$ over all real line, we get
$$
\f{d}{dt}\int_\R C^0\,dx=-\int_\R D_x C^1\,dx=-C^1\Big|_{-\infty}^\infty=0.
$$

The equation above implies that the 
$$
\int_\R C^0\,dx
$$
is constant along time under the conditions mentioned and is called {\it conserved quantity}. Very often conserved quantities give useful information about the behaviour of the solutions and they reveal physically relevant information about the phenomena described by the equation, if any.

\begin{theorem}\label{teo3.1}
Up to second order, equation \eqref{1.0.1} admits the following conserved currents
\bb\label{3.0.1}
\big(u-u_{xx},2u_x^2-2u^2-2uu_x+2uu_{xx}\big),
\ee
\bb\label{3.0.2}
\big(e^{-2x}(u+2u_x+u_{xx}),2e^{-2x}(u_t-3u_x^2-3uu_x+u_{tx}-3uu_{xx})\big),
\ee
and 
\bb\label{3.0.3}
\big(f(t)e^{x}(u-u_{xx}),e^x\left[f(t)(2u_{x}^{2}-4uu_{x}+2uu_{xx})-f'(t)(u-u_x)\right]\big).
\ee
\end{theorem}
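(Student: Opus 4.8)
The goal is to verify that each of the three pairs in \eqref{3.0.1}--\eqref{3.0.3} is genuinely a conserved current for \eqref{1.0.1}, i.e.\ that $D_t C^0 + D_x C^1 = 0$ holds on solutions of the equation. The most direct route — and the one I would carry out first as a check — is simply to compute the total derivatives $D_t C^0$ and $D_x C^1$ for each current, add them, and eliminate $u_{txx}$ (and, where needed, higher $t$-derivatives of $u$) using \eqref{1.0.1} and its differential consequences. For \eqref{3.0.1}, with $C^0 = u - u_{xx}$ one has $D_t C^0 = u_t - u_{txx}$, which \emph{is} the left-hand side of \eqref{1.0.1}; so the verification amounts to checking that $D_x C^1$ equals minus the right-hand side of \eqref{1.0.1}, a one-line differentiation of the stated flux. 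This is the cleanest of the three and serves as a sanity check on the normalization.

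The substantive content, however, is not merely verifying these are conservation laws but establishing that the list is \emph{complete} up to second order, as the phrase ``admits the following'' in the context of ``classify all local conservation laws \dots\ up to second order'' (Section~\ref{sec3}) demands. For this I would set up the general ansatz $C^0 = C^0[u]$, $C^1 = C^1[u]$ with both components depending on $t, x, u, u_x, u_t, u_{xx}, u_{tx}, u_{tt}$ (all derivatives through second order), impose $D_t C^0 + D_x C^1 \equiv 0$ modulo the equation \eqref{1.0.1}, and reduce to a determining system. The standard and more systematic tool here is the \emph{characteristic} (multiplier) form: a conserved current corresponds to a multiplier $\Lambda[u]$ such that $\Lambda \cdot \big(u_t - u_{txx} - 4uu_x - 2u_x^2 - 2uu_{xx} + 6u_x u_{xx} + 2uu_{xxx}\big)$ is a total divergence, which happens if and only if the Euler--Lagrange operator annihilates this product. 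Applying $E_u$ and collecting coefficients of the independent jet variables yields an overdetermined linear PDE system for $\Lambda$. Solving that system should produce exactly the multipliers corresponding to \eqref{3.0.1}--\eqref{3.0.3}; note in particular that \eqref{3.0.3} carries an arbitrary function $f(t)$, so I expect a functional degree of freedom in the multiplier, and the constraint $f'' $-type relations dropping out of the determining equations is what pins down the admissible $t$-dependence.

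The order in which I would proceed: (i) confirm the three currents are conserved by direct substitution, using \eqref{1.0.1} to remove mixed derivatives; (ii) set up the multiplier determining equations via the Euler operator; (iii) restrict the multiplier's jet dependence to be consistent with ``second order'' conserved quantities and integrate the determining system; (iv) reconstruct the fluxes $C^1$ from the multipliers by a homotopy (integration-by-parts) formula and match against the stated expressions, fixing normalizations and absorbing trivial (null) currents. The main obstacle I anticipate is step (iii): the determining system is overdetermined and its integration is where the arbitrary function $f(t)$ and the two exceptional exponential weights ($e^{x}$ and $e^{-2x}$) must emerge — these weights reflect the specific structure of the right-hand side of \eqref{1.0.1} (and presumably relate to the factorized form $\partial_x(2-\partial_x)(1+\partial_x)u^2$, whose operator roots at $\partial_x = 1$ and $\partial_x = -2$ plausibly govern the exponents $e^{x}$ and $e^{-2x}$). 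Keeping track of which solution branches give genuinely nontrivial currents versus total-derivative artifacts, and correctly bounding the derivative order so that no spurious higher-order laws are admitted, is the delicate bookkeeping that makes the completeness claim — rather than the verification — the hard part.
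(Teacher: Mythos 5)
Your proposal takes essentially the same route as the paper: the paper's proof also just computes the divergence of each pair \eqref{3.0.1}--\eqref{3.0.3} directly and observes that it is a multiple of the left-hand side of \eqref{1.0.1} (by the factors $1$, $e^{-2x}$, and $f(t)e^{-2x}$ in the paper's notation), hence vanishes on solutions, exactly as in your step (i). For the completeness claim the paper offers no hand computation at all --- it defers to symbolic packages (GeM and related software), which internally implement precisely the multiplier/Euler-operator method of your steps (ii)--(iv) --- so your sketch is, if anything, more explicit than the paper about how uniqueness would be established, though, like the paper, you stop short of actually integrating the determining system.
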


\begin{proof}
Taking the divergence of \eqref{3.0.1}--\eqref{3.0.3} we obtain, respectively,
$$
\Big[u_{t}-u_{txx}-\Big(4uu_{x}+2u_{x}^{2}+2uu_{xx}-6u_{x}u_{xx}-2uu_{xxx}\Big)\Big],
$$
$$
e^{-2x}\Big[u_{t}-u_{txx}-\Big(4uu_{x}+2u_{x}^{2}+2uu_{xx}-6u_{x}u_{xx}-2uu_{xxx}\Big)\Big],
$$
and
$$
f(t)e^{-2x}\Big[u_{t}-u_{txx}-\Big(4uu_{x}+2u_{x}^{2}+2uu_{xx}-6u_{x}u_{xx}-2uu_{xxx}\Big)\Big],
$$
where $f\in C^1(\R)$. When considered on the solutions of \eqref{1.0.1} the expressions above vanish, which shows that \eqref{3.0.1}--\eqref{3.0.3} are conserved currents for the equation.

To prove the uniqueness, we need to show that they are the only conserved vectors for the equation up to second order. Likewise for finding symmetries, obtaining conserved vectors is a very tedious and lengthy process \cite{2ndbook,bk,i1,olverbook}. Fortunately, we have at our disposal several computational facilities for establishing them \cite{chev-2007,chev-2010-1,chev-2010-2,chev-2014,chev-2017} and for this reason we again omit the details since the uniqueness can be checked using such packages.
\end{proof}

We say that two conserved currents $(C^0,C^1)$ and $(\overline{C}^0,\overline{C}^1)$ are equivalent if they differ by a current $(A,B)$ whose divergence vanishes identically. For example, the current \eqref{3.0.1} is equivalent to 
$$
\big(u,2u_x^2-2u^2-2uu_x+2uu_{xx}-u_{tx}\big),
$$
since
$$\big(u,2u_x^2-2u^2-2uu_x+2uu_{xx}-u_{tx}\big)-\big(u-u_{xx},2u_x^2-2u^2-2uu_x+2uu_{xx}\big)=(u_{xx},-u_{tx})$$
and the divergence of the current $(u_{xx},-u_{tx})$ is identically zero.

From the conserved currents we can easily obtain the conserved quantities for the solutions, jointly with its spatial derivatives, decaying to $0$ at infinity.
\begin{theorem}\label{teo3.2}
Suppose that $u$ is a solution of \eqref{1.0.1} such that $u$ and its derivatives up to second order vanish as $|x|\rightarrow\infty$. Then
\bb\label{3.0.4}
{\cal H}_1(t)=\int_\R u(t,x)dx,
\ee
\bb\label{3.0.5}
{\cal H}_2(t)=\int_\R e^{-2x} (u+2u_x+u_{xx})(t,x)dx,
\ee
and
\bb\label{3.0.6}
{\cal H}_3(t)=\int_\R f(t)e^{x} (u-u_{xx})(t,x)dx,
\ee
where $f\in C^1(\R)$.
\end{theorem}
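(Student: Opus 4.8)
The plan is to obtain each conservation statement directly from Theorem \ref{teo3.1} by integrating the corresponding vanishing divergence over the whole line, exactly as in the heuristic computation preceding Theorem \ref{teo3.1}. For any conserved current $(C^0,C^1)$ of \eqref{1.0.1} one has $D_tC^0+D_xC^1=0$ on solutions, so—assuming we may differentiate under the integral sign—
\[
\frac{d}{dt}\int_\R C^0\,dx=\int_\R D_tC^0\,dx=-\int_\R D_xC^1\,dx=-C^1\Big|_{x=-\infty}^{x=+\infty}.
\]
Thus the proof reduces, for each of the three currents \eqref{3.0.1}--\eqref{3.0.3}, to two tasks: (i) justifying the interchange of $d/dt$ with the spatial integral, and (ii) showing that the conserved flux $C^1$ tends to $0$ as $x\to\pm\infty$, so that the boundary term drops and the integral is constant in $t$.

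First I would treat $\mathcal{H}_1$. Using \eqref{3.0.1}, whose flux is $C^1=2u_x^2-2u^2-2uu_x+2uu_{xx}$, every summand is a product of $u$ with its spatial derivatives up to second order; by hypothesis each factor vanishes as $|x|\to\infty$, so $C^1\to0$ at both ends and $\int_\R(u-u_{xx})\,dx$ is time-independent. The equivalence recorded after Theorem \ref{teo3.1} (subtracting the null current $(u_{xx},-u_{tx})$) further shows that $\int_\R(u-u_{xx})\,dx=\int_\R u\,dx$, since $\int_\R u_{xx}\,dx=u_x\big|_{-\infty}^{+\infty}=0$ under the decay assumption; this identifies the invariant with \eqref{3.0.4}.

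The currents \eqref{3.0.2} and \eqref{3.0.3} are handled in the same manner, the only new feature being the exponential weights $e^{-2x}$ and $e^{x}$. For $\mathcal{H}_3$ I would observe that the explicit time dependence through $f(t)$ is already absorbed into the total derivative $D_t$, so that $\frac{d}{dt}\int_\R f(t)e^x(u-u_{xx})\,dx=-C^1\big|_{-\infty}^{+\infty}$ with $C^1=e^x\left[f(t)(2u_x^2-4uu_x+2uu_{xx})-f'(t)(u-u_x)\right]$; as $f,f'$ are fixed functions of $t$, the boundary behaviour is governed solely by the spatial factors $e^x u$, $e^x u_x$ and $e^x(u\,\partial_x^k u)$. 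For $\mathcal{H}_2$ one proceeds identically, noting that its flux additionally contains the time derivatives $u_t,u_{tx}$, which must therefore be included among the quantities required to decay (or controlled through the equation \eqref{1.0.1}).

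The main obstacle is precisely these weighted cases. The bare hypothesis that $u$ and its derivatives vanish at infinity does not by itself tame $e^{-2x}$ as $x\to-\infty$ nor $e^{x}$ as $x\to+\infty$; what is genuinely needed is that the products of each weight with the relevant derivatives tend to $0$, which simultaneously guarantees that $\mathcal{H}_2$ and $\mathcal{H}_3$ are well defined (the integrands being absolutely integrable) and that the flux boundary terms vanish. I would therefore make the decay hypothesis quantitative in the two exponentially weighted directions—requiring $e^{-2x}$ times the derivatives to vanish as $x\to-\infty$ and $e^{x}$ times them to vanish as $x\to+\infty$—after which the dominated convergence theorem legitimises the differentiation under the integral sign and closes the argument.
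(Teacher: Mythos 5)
Your proposal takes essentially the same route as the paper: the paper gives no separate proof of Theorem \ref{teo3.2}, obtaining it directly by applying the divergence-integration computation stated before Theorem \ref{teo3.1} to the three currents \eqref{3.0.1}--\eqref{3.0.3}, so constancy follows once the flux $C^1$ vanishes as $x\to\pm\infty$, exactly as you argue (including the identification $\int_\R(u-u_{xx})\,dx=\int_\R u\,dx$ via the decay of $u_x$). Your additional point---that the bare hypothesis must be strengthened so that $e^{-2x}$ (as $x\to-\infty$) and $e^{x}$ (as $x\to+\infty$) times the relevant quantities tend to zero, which also makes $\mathcal{H}_2$ and $\mathcal{H}_3$ well defined---is a sound refinement that the paper itself only acknowledges implicitly in Section \ref{sec6}, where precisely these decay rates are noted as necessary.
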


The conservation law (or the conserved current) of a differential {\it equation}, in the sense previously presented, is an intrinsic property of the equation, differently of conserved quantities, which are a property of the {\it solution} of the equation in conjunction with a conserved current of the latter.

Let us illustrate the comment above. If we take $a=1$ in \eqref{2.0.16} and consider \eqref{3.0.1}. Then, we obtain
$$
\big(u-u_{xx},2u_x^2-2u^2-2uu_x+2uu_{xx}\big)\Big|_{u=e^{x-ct}}=(0,0).
$$
Therefore,
$$
\Big[D_t\big(u-u_{xx}\big)+D_x\big(2u_x^2-2u^2-2uu_x+2uu_{xx}\big)\Big]\Big|_{u=e^{x-ct}}=0.
$$

On the other hand, a straightforward calculation shows that
$$
\int_{-\infty}^\infty e^{x-ct}dx=+\infty,
$$
meaning that $u(t,x)=e^{x-ct}$ does not conserve \eqref{3.0.4}.

The situation is even more dramatic if we consider the ``equivalent current'' 
$$
\big(u-u_{xx},2u_x^2-2u^2-2uu_x+2uu_{xx}\big).
$$

The conserved quantity originated from this current is
$$
\overline{{\cal H}}_1(t)=\int_\R(u-u_{xx})dx.
$$

If $u$ and its derivatives vanishes as long as $|x|\rightarrow\infty$, then ${\cal H}_1(t)$ and $\overline{{\cal H}}_1(t)$ are just the same. However, considering $u(t,x)=e^{x-ct}$ we have already seen that ${\cal H}_1(t)=\infty$, while a simple calculation shows that $\overline{{\cal H}}_1(t)=0$! How and why could we obtain so different results from conserved quantities that should be the same?

The answer to the last question is simple, but subtle: these conserved quantities are not the same for $u(t,x)=e^{x-ct}$, because they are not equivalent and, more importantly, none of them are really conserved quantities for the {\it solution} considered. The reason for such a fact is: in order for a solution of \eqref{1.0.1} to have \eqref{3.0.4} as a conserved quantity, it must necessarily vanish at infinity. The solution $u(t,x)=e^{x-ct}$ does not fulfill such condition and this is the root of the observed discrepancy.

\section{Positive solutions and unique continuation}\label{sec4}

Throughout this section we use some aspects of Sobolev spaces and the inverse of the Helmholtz operator $\Lambda^{2}=1-\p_x^2$ that can be found in \cite{taylor} for further details. The convolution of two functions $f$ and $g$ is denoted by $f\ast g$, whereas derivatives with respect to the first and second arguments of a function $u$ will be denoted by $u_t$ or $\p_tu$ and $u_x$ or $\p_xu$, respectively. 

In view of the last discussion after theorem \ref{teo3.2}, henceforth we shall only consider solutions of \eqref{1.0.1} that, jointly with its first and second order $x-$derivatives, decay to $0$ as $|x|\rightarrow\infty$. This will be assumed without further mention throughout this section.

Let $m(t,x)=u(t,x)-u_{xx}(t,x)$ (note that with the conditions above, we have $m\rightarrow0$ as $x\rightarrow\pm\infty$). Then we can express $u$ in terms of $m$ since $u=g\ast m$, $g(x)=e^{-|x|}/2$. Moreover, we can rewrite \eqref{1.0.1} in two other alternative forms, namely,
\bb\label{4.0.1}
m_t+2um-6u_xm-2um_x=2(u-u_x)^2,
\ee
and
\bb\label{4.0.2}
u_t+u^2-2uu_x=G\ast u^2,
\ee
where $G=g+\p_x g$, that is,
\bb\label{4.0.3}
G(x)=(1-\sign{(x)})g(x).
\ee

Through this section we assume that $u$ is a smooth solution of \eqref{1.0.1} decaying fast enough as $|x|\rightarrow\infty$, as well as its derivatives, such that $G\ast u^2$ is well defined and \eqref{4.0.2} makes sense as an alternative form of \eqref{1.0.1}. Also, we assume that $u$ exists on $(0,T)\times\R$, for a certain $T>0$. Note that it is not in question the problem of uniqueness of solutions.

The quantity \eqref{3.0.4} shows that if $u$ is either non-negative or non-positive, then the quantity ${\cal H}_1$ express the invariance of the $L^1(\R)-$norm of the solutions of \eqref{1.0.1}. In a similar way, assuming that $f(t)=1$ and if $m(t,x)>0$ is either non-negative or non-positive, then the quantity \eqref{3.0.6} implies the conservation of $\|m\|_X$, where $X=L^1(\R,e^x)$ is the space of integrable functions with measure $d\mu=e^xdx$. A natural question is: Does \eqref{1.0.1} admits non-negative or non-positive solutions decaying to $0$ as $|x|\rightarrow0$? In this section we give a positive answer, showing that under certain circumstances we have solutions satisfying the condition $u\geq0$.

Let us recall two preliminary results that will be very useful for our purposes.

\begin{lemma}\label{lema4.1}{\tt(\cite[Theorem 1.1]{liu-jde})} If $s>3/2$ and $u_0\in H^s(\R)$ then there exists $T>0$ and a unique solution $u\in C^0([0,T];H^s(\R))$ of the Cauchy problem
\bb\label{4.0.4}
\left\{
\ba{lcl}
u_{t}-u_{txx}=4uu_{x}+2u_{x}^{2}+2uu_{xx}-6u_{x}u_{xx}-2uu_{xxx},\\
\\
u(0,x)=u_0(x).
\ea
\right.
\ee
Moreover, the solution depends continuously on the initial data $u_0$ and we have the estimate
\bb\label{4.0.5}
\|u(t,\cdot)\|_{H^s(\R)}\leq 2\|u_0\|_{H^s(\R)},\quad\text{for}\,\,0\leq T\leq\f{1}{c_s\|u_0\|_{H^s(\R)}},
\ee
where $c_s$ is a positive constant depending on $s$.
\end{lemma}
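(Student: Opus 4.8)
The plan is to treat the Cauchy problem through the nonlocal transport formulation \eqref{4.0.2}, read as the abstract evolution equation
\[
u_t = 2uu_x - u^2 + G\ast u^2 =: \Phi(u),
\]
and to combine a priori energy estimates in $H^s(\R)$ with an approximation scheme in the spirit of Kato's theory of quasilinear evolution equations. The key structural observation is that the \emph{only} term causing a loss of derivative is the transport term $2uu_x$: the remaining nonlinearity is harmless, because $H^s(\R)$ is a Banach algebra for $s>1/2$ (so $u^2\in H^s$ whenever $u\in H^s$) and the operator $G\ast\,\cdot\, =(1+\p_x)\Lambda^{-2}$ gains one derivative, mapping $H^s$ into $H^{s+1}$. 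Thus the problem is genuinely quasilinear of transport type, and the whole proof should be organised around controlling this one term.

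First I would derive the a priori estimate responsible for \eqref{4.0.5}. Applying $\Lambda^s=(1-\p_x^2)^{s/2}$ to the equation, pairing with $\Lambda^s u$ in $L^2(\R)$, and integrating by parts in the transport term gives
\[
\f{1}{2}\f{d}{dt}\|u\|_{H^s}^2 = 2\int_\R \big([\Lambda^s,u]u_x\big)\Lambda^s u\,dx - \int_\R u_x(\Lambda^s u)^2\,dx + \big(\Lambda^s(G\ast u^2-u^2),\Lambda^s u\big)_{L^2}.
\]
The commutator term is handled by the Kato--Ponce inequality, $\|[\Lambda^s,u]u_x\|_{L^2}\le C\|u_x\|_{L^\infty}\|u\|_{H^s}$, and since $s>3/2$ forces $s-1>1/2$, the Sobolev embedding yields $\|u_x\|_{L^\infty}\le C\|u\|_{H^s}$. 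Combined with the algebra and smoothing properties above, all three contributions are bounded by $C\|u\|_{H^s}^3$, so
\[
\f{d}{dt}\|u\|_{H^s}\le C\|u\|_{H^s}^2 .
\]
Integrating this scalar differential inequality yields $\|u(t)\|_{H^s}\le \|u_0\|_{H^s}\big/(1-Ct\|u_0\|_{H^s})$, hence $\|u(t)\|_{H^s}\le 2\|u_0\|_{H^s}$ on $0\le t\le (2C\|u_0\|_{H^s})^{-1}$; this is exactly \eqref{4.0.5} with $c_s=2C$.

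To convert the a priori estimate into an actual solution I would regularise, for instance by replacing $\Phi$ with a Friedrichs-mollified operator $\Phi_\eps$ (or by adding a vanishing viscosity $\eps\p_x^2u$), obtaining smooth approximations $u^\eps$ that satisfy the same estimate uniformly in $\eps$. The uniform $H^s$ bound gives weak-$\ast$ compactness, while a difference estimate shows that $\{u^\eps\}$ is Cauchy in a weaker norm such as $L^2(\R)$; interpolation against the uniform bound then upgrades the convergence enough to pass to the limit in each term of the equation, producing $u\in C^0([0,T];H^s(\R))$. Uniqueness and continuous dependence follow from the same mechanism applied to the difference $w=u-v$ of two solutions sharing the data: the transport structure gives $\f{d}{dt}\|w\|_{L^2}^2\le C(\|u\|_{H^s},\|v\|_{H^s})\|w\|_{L^2}^2$, and Gr\"onwall closes the argument. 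The main obstacle throughout is the derivative loss in $2uu_x$: every step---the energy identity, the convergence of the approximations, and the uniqueness estimate---must absorb the extra derivative through a commutator or an integration by parts, and it is precisely the sharp commutator estimate at the threshold regularity $s>3/2$ that makes the scheme close.
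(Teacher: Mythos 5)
You should note at the outset that the paper contains no proof of this lemma: it is imported verbatim from \cite[Theorem 1.1]{liu-jde}, so your attempt can only be measured against that reference. Mi, Liu, Guo and Luo prove well-posedness in the Besov scale $B^s_{p,r}$ via Littlewood--Paley decomposition and transport-equation estimates, the $H^s$ statement being the case $p=r=2$; you instead run the classical energy method directly in $H^s$, which is a legitimate and arguably more elementary route to the special case the paper actually uses, at the price of not covering the full Besov range of the cited theorem. Your reduction is the right one and matches the paper's own reformulation \eqref{4.0.2}: writing $u_t=2uu_x-u^2+G\ast u^2$ with $G\ast{}=(1+\p_x)\Lambda^{-2}$ isolates the derivative loss in the single transport term, the Kato--Ponce commutator estimate together with the embedding $H^{s-1}(\R)\hookrightarrow L^\infty(\R)$ (valid precisely because $s>3/2$) gives $\f{d}{dt}\|u\|_{H^s}\le C\|u\|_{H^s}^2$, and integrating this Riccati inequality yields exactly the doubling bound \eqref{4.0.5} with lifespan $T\sim (c_s\|u_0\|_{H^s})^{-1}$. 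The energy identity you display, with the commutator, the $\int_\R u_x(\Lambda^s u)^2\,dx$ term, and the smoothing of $G\ast$, is correct.

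One step is asserted too quickly. The mollification/viscosity compactness argument, the $L^2$ difference estimate, and interpolation give a solution in $C^0([0,T];H^{s'})$ for every $s'<s$ together with $u\in L^\infty([0,T];H^s)$ and weak-$\ast$ continuity in time; they do \emph{not} by themselves yield strong continuity $u\in C^0([0,T];H^s)$, nor continuous dependence of the data-to-solution map at the top regularity, because the interpolation constants degenerate as $s'\uparrow s$. The standard repair is the Bona--Smith device: solve with mollified data $u_0^\eps=J_\eps u_0$, use the energy estimates to show that $u^\eps\to u$ in $C^0([0,T];H^s)$ uniformly for data in bounded sets, and read off both the continuity in time and the continuous dependence claimed in the lemma; an analogous approximation mechanism underlies the continuity statement in \cite{liu-jde} in its Besov formulation. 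With that supplement your scheme closes; the uniqueness argument via the $L^2$ Gr\"onwall estimate for the difference of two solutions is fine as stated.
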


\begin{lemma}{\tt(\cite[Theorem 1.4]{liu-jde})}\label{lema4.2}
If the initial data $u_0$ is analytic on $\R$, then there exists an $\epsilon>0$ and a unique solution $u(t,x)$ of the problem \eqref{4.0.4} that is analytic in both variables for $(t,x)\in(-\epsilon,\epsilon)\times\R$. 
\end{lemma}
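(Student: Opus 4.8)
The statement is the local existence, uniqueness and joint analyticity of the solution emanating from analytic data; this is exactly the kind of result proved for Camassa--Holm--type equations by the abstract Cauchy--Kovalevskaya (ACK) theorem, so that is the route I would take (and presumably the one of \cite{liu-jde}). The plan is first to pass to the nonlocal form \eqref{4.0.2}, rewriting the Cauchy problem \eqref{4.0.4} as an abstract evolution equation
$$
u_t=F(u):=2uu_x-u^2+G\ast u^2,\qquad u(0,\cdot)=u_0 ,
$$
in which the derivative loss has been reduced from the three orders visible in \eqref{1.0.1} to the single order carried by the transport term $2uu_x$, the operator $G\ast\,\cdot=(1+\p_x)(1-\p_x^2)^{-1}$ (see \eqref{4.0.3}) being a Fourier multiplier of net order $-1$, hence smoothing.

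Next I would fix a decreasing scale of Banach spaces of analytic functions $\{E_s\}_{0<s\le1}$, for instance a Gevrey/analytic scale with a norm of the type $\|f\|_s=\sup_{k\ge0}\frac{s^k}{k!/(k+1)^2}\|\p_x^kf\|_{L^2(\R)}$, chosen so that three properties hold: (i) $E_s$ is a Banach algebra, $\|fg\|_s\le c\,\|f\|_s\|g\|_s$; (ii) differentiation obeys the characteristic ACK estimate $\|\p_x f\|_{s'}\le\frac{c}{s-s'}\|f\|_s$ for $s'<s$; and (iii) the multipliers $(1-\p_x^2)^{-1}$ and $(1-\p_x^2)^{-1}\p_x$ are bounded on each $E_s$, uniformly in $s$ (both have bounded, decaying symbols commuting with $\p_x$, so they map $E_s$ into itself with no loss). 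Granting these, the basic Lipschitz estimate
$$
\|F(u)-F(v)\|_{s'}\le\frac{C(R)}{s-s'}\,\|u-v\|_s\qquad\text{whenever}\quad\|u\|_s,\|v\|_s\le R
$$
is obtained by telescoping $F(u)-F(v)$ into the pieces $u\,\p_x(u-v)$, $(u-v)\,\p_x v$, $(u+v)(u-v)$ and $G\ast\big((u+v)(u-v)\big)$ and applying (i)--(iii): only the transport term produces the factor $(s-s')^{-1}$, while the algebraic and nonlocal terms are controlled with no derivative loss.

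With these hypotheses verified, the ACK theorem in its Ovsyannikov--Nirenberg--Nishida form supplies a $\delta>0$ and a unique $u(t)$ that is analytic in $t$ with values in $E_s$ for $|t|<\delta(1-s)$, provided $u_0\in E_1$ (which is how ``$u_0$ analytic on $\R$'' enters: a uniform radius of analyticity in $x$). Since each $E_s$ consists of functions analytic in $x$, a standard argument bounding the mixed coefficients $\p_t^j\p_x^k u$ upgrades this to joint analyticity of $u(t,x)$ on a set $(-\eps,\eps)\times\R$, which is the claim, with uniqueness coming directly from the same estimate. I expect the main obstacle to be getting the scale right rather than any single computation: the weight in $\|\cdot\|_s$ must be engineered so that the algebra property (i) and the sharp derivative bound (ii) hold simultaneously, and one must recognize that the nonsmooth kernel $G$ (carrying $1-\sign(x)$ and the corner of $g(x)=e^{-|x|}/2$) should not be treated as an analytic convolution kernel but through the resolvent structure of $(1-\p_x^2)^{-1}$, whose symbol $(1+i\xi)/(1+\xi^2)$ then acts boundedly on the analytic scale. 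Once this smoothing is in hand, the remaining bounds are the routine algebra-and-derivative estimates built into the ACK machinery.
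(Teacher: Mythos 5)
The paper offers no proof of this lemma: it is imported verbatim from \cite[Theorem 1.4]{liu-jde}, and the proof given there is precisely the route you outline --- rewrite \eqref{4.0.4} in the nonlocal form \eqref{4.0.2} so that only one derivative is lost, then apply the abstract Cauchy--Kovalevskaya (Ovsyannikov-type) theorem on a decreasing scale of analytic Banach spaces with the weights $s^k(k+1)^2/k!$, the smoothing of $(1+\p_x)(1-\p_x^2)^{-1}$ handled exactly as you describe. Your proposal is therefore correct and essentially identical to the cited proof, the only adjustment being the one you yourself anticipate: the scale there is built on $H^2(\R)$ rather than $L^2(\R)$ base norms, so that the algebra property holds with a constant uniform in $s$.
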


Let us denote by $u_0$ and $m_0$ the functions $u(0,x)$ and $m(0,x)$, respectively.

\begin{theorem}\label{teo4.1}{\tt (Existence of positive solutions)} Equation \eqref{1.0.1} admits solutions $u\geq0$ with corresponding $m(t,x)$ non-negative. Moreover, $u$ and its derivatives decay to $0$ as $|x|\rightarrow\infty$.
\end{theorem}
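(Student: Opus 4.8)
The plan is to construct positive solutions by carefully choosing the initial data $u_0$ (equivalently $m_0$) and then propagating positivity forward in time using the transport structure of equation \eqref{4.0.1}. The natural strategy is to pick $m_0 \geq 0$ with enough decay and smoothness, invoke Lemma \ref{lema4.1} to obtain a local solution $u \in C^0([0,T];H^s(\R))$, and then show that $m(t,x) \geq 0$ is preserved along the flow. Once $m \geq 0$ is established, the representation $u = g \ast m$ with $g(x) = e^{-|x|}/2 > 0$ immediately forces $u \geq 0$, since the convolution of a nonnegative function with a strictly positive kernel is nonnegative. The decay of $u$ and its derivatives at infinity should follow from the decay of $m_0$ together with the smoothing properties of the kernel $g$ (convolution with $g$ maps decaying functions to decaying functions), combined with the $H^s$-regularity guaranteed by Lemma \ref{lema4.1} for $s > 3/2$.

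First I would rewrite \eqref{4.0.1} in characteristic (Lagrangian) form. Equation \eqref{4.0.1} reads $m_t - 2u m_x = -2um + 6u_x m + 2(u-u_x)^2$, so along the characteristics $\dot{q}(t) = -2u(t,q(t))$ emanating from each point $q(0)=x$, the quantity $m(t,q(t))$ satisfies an ordinary differential equation of the form
\bb\label{plan1}
\f{d}{dt}m(t,q(t)) = \big(6u_x(t,q(t)) - 2u(t,q(t))\big)\,m(t,q(t)) + 2\big(u-u_x\big)^2(t,q(t)).
\ee
The crucial observation is that the forcing term $2(u-u_x)^2 \geq 0$ is a perfect square, hence nonnegative, while the coefficient multiplying $m$ is merely bounded (not necessarily signed). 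Writing the solution of \eqref{plan1} via the integrating factor $\exp\!\big(\int_0^t (2u - 6u_x)\,ds\big)$, one sees that if $m_0(x) \geq 0$ then $m(t,q(t))$ remains nonnegative for as long as the solution exists, because it is the sum of a nonnegative multiple of $m_0$ and a nonnegative integral of the square forcing term. Since the characteristics define a diffeomorphism of $\R$ for each $t$ (as $u$ is smooth and bounded on $[0,T]$), this yields $m(t,x) \geq 0$ everywhere.

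To make this rigorous I would first establish, from Lemma \ref{lema4.1}, that for $s > 3/2$ the solution $u$ and $u_x$ are bounded and continuous on $[0,T]\times\R$ (using the Sobolev embedding $H^s \hookrightarrow C^1$ for $s>3/2$, since $u \in H^s$ gives $u,u_x \in L^\infty$), so the characteristic ODE $\dot q = -2u$ has a well-defined global-in-space flow with the integrating factor finite on $[0,T]$. A concrete choice of $u_0$ is warranted: for instance one may take $u_0 = g \ast m_0$ for a smooth, compactly supported or rapidly decaying $m_0 \geq 0$, which guarantees both $m_0 \geq 0$ and the required decay and regularity of $u_0 \in H^s(\R)$.

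The main obstacle I anticipate is the interplay between positivity preservation and the decay/regularity hypotheses that the earlier sections assume throughout (namely that $u$ together with its first and second derivatives decays to $0$ as $|x|\to\infty$). The positivity argument via \eqref{plan1} is clean, but one must verify that the chosen data actually produces a solution in the function class where \eqref{4.0.1} is equivalent to \eqref{1.0.1} and where $u=g\ast m$ holds — this requires controlling the decay of $m(t,\cdot)$ uniformly on $[0,T]$, not just at $t=0$. Propagating spatial decay through the nonlinear flow is the delicate point: while $m_0$ decaying does not automatically force $m(t,\cdot)$ to decay, the conservation of the weighted norm $\|m\|_{L^1(\R,e^x)}$ from \eqref{3.0.6} (with $f\equiv1$) together with the $H^s$-bound \eqref{4.0.5} should be combined to control both tails. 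I expect the cleanest route is to confine attention to a time interval $[0,T]$ on which Lemma \ref{lema4.1} guarantees existence, uniqueness, and the $H^s$ estimate, and to read off decay of $u$ and its derivatives from membership in $H^s$ for $s$ large enough, so that the smoothing convolution $u = g\ast m$ supplies the remaining qualitative conclusions.
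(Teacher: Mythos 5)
Your proposal is correct and follows essentially the same route as the paper's own proof: both pass to the Lagrangian form along the flow $\dot q = -2u$, observe that $\frac{d}{dt}m(t,q) = (6u_x-2u)m + 2(u-u_x)^2$ has a nonnegative forcing term, apply an integrating factor to propagate $m_0\geq 0$, use that $q(t,\cdot)$ is an increasing diffeomorphism, and conclude $u = g\ast m \geq 0$ from positivity of the kernel. If anything, your discussion of propagating spatial decay and of the Sobolev embedding $H^s \hookrightarrow C^1$ is more careful than the paper, which asserts the decay and the well-posedness of the characteristic ODE with only a reference to Constantin's argument.
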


\begin{proof}
In view of lemmas \ref{lema4.1} and \ref{lema4.2} we can choose a suitable initial data $u_0\in H^s(\R)$ and $s>3/2$ such that the problem
\bb\label{4.0.6}
\left\{\ba{l}
q_t(t,x)=-2u(t,q(t,x)),\\
\\
q(0,x)=x,
\ea
\right.
\ee
for each fixed $x$ has a unique solution. Also, note that the corresponding solution $u$ vanish at infinity, as well as its derivatives.

Differentiating \eqref{4.0.6} with respect to $x$ we obtain
$$
\left\{
\ba{l}
q_{tx}(t,x)=-2u_x(t,q(t,x))q_x(t,x),\\
\\
q_x(0,x)=1.
\ea
\right.
$$

Again, choosing a suitable $s$ and initial data $u_0\in H^s(\R)$, we can proceed similarly as in \cite[Theorem 3.1]{const} to conclude that the function $q(t,\cdot)$ is an increasing diffeomorphism on the line and
$$
q_x(t,x)=\exp{\Big(-2\int_a^tu_x(s,q(s,x))ds\Big)}>0.
$$

Therefore,
$$
\f{d}{dt}m(t,q)=m_t+q_tm_x=m_t-2um_x=-2(u-3u_x)m+2(u-u_x)^2,
$$
that is
$$
\f{d}{dt}\Big(\exp{\Big(2\int_0^t(u-3u_x)(s,q(s,x))ds}\Big)m(t,q)\Big)\geq0,
$$
which yields
\bb\label{4.0.7}
m(t,q(t,x))\geq m_0(x)\exp{\Big(-2\int_0^t(u-3u_x)(s,q(s,x))ds\Big)}.
\ee

Therefore, as long as $m_0(x)\geq0$ we also have $m(t,q(t,x))\geq0$. Since $q(t,\cdot)$ is a diffeomorphism, if $m_0$ is non-negative, then $m(t,\cdot)\geq0$. Finally, since $u=g\ast m$, if $m\geq0$ then $u$ is also non-negative.
\end{proof}

As a consequence of the theorem above, we know that \eqref{1.0.1} has non-negative solutions decaying to $0$ as $|x|\rightarrow\infty$, so that for these solutions \eqref{3.0.4} is equivalent to the conservation of the $L^1(\R)$ norm of $u$. Also, if $e^{x}m$ is integrable and $m$ is non-negative, then \eqref{3.0.6} implies that $e^xm\in L^1(\R)$, or that $m\in L^1(\R;e^x)$, meaning that $m$ belongs to $L^1(\R)$ with measure $d\mu=e^x dx$.

Let $u$ be a solution of \eqref{1.0.1}, $G$ as in \eqref{4.0.3}, and 
\bb\label{4.0.8}
F_t(x)=(G\ast u^2)(t,x)=\int_{-\infty}^{+\infty}G(x-y)u(t,y)^2dy.
\ee

If $t\in(0,T)$ and $a<b$ are real numbers, we then have the representation
\bb\label{4.0.9}
F_t(b)-F_t(a)=\int_{-\infty}^{+\infty}S_{a,b}(y)u(t,y)^2 dy,
\ee
where
\bb\label{4.0.10}
S_{a,b}(y)=G(b-y)-G(a-y)=\f{1-\sign{(b-y)}}{2}e^{-|b-y|}-\f{1-\sign{(a-y)}}{2}e^{-|a-y|}.
\ee

We observe that if $y<a$, then $\sign{(b-y)}=\sign{(a-y)}=+1$ and $S_{a,b}(y)=0$, whereas if $y>b$ $S_{a,b}(y)=e^{b-y}-e^{a-y}=e^{a-y}(e^{b-a}-1)>0$. This proves the following result.
\begin{theorem}\label{teo4.2}
Let $a$ and $b$ two real numbers with $a<b$ and $S_{a,b}$ given by \eqref{4.0.10}. Then $S_{a,b}$ is a real valued function defined over $\R$ and belongs to $L^1(\R)$. Moreover, $S_{a,b}$ is non-negative outside the compact set $[a,b]$.
\end{theorem}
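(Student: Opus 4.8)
The plan is to verify the three assertions in turn, since all of them follow quickly from the explicit piecewise form of $G$. First I would record that $G(x)=(1-\sign(x))g(x)$ is a perfectly well-defined real function on $\R$; in fact $G(x)=e^{x}$ for $x<0$ and $G(x)=0$ for $x>0$ (the value at the single point $x=0$ being irrelevant in what follows). Consequently $S_{a,b}(y)=G(b-y)-G(a-y)$, being a difference of two shifted copies of $G$, is automatically real-valued and defined for every $y\in\R$. This disposes of the first claim with essentially no work.

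For the $L^1$ membership I would exploit the pointwise domination $|G(x)|\le e^{-|x|}$, which holds because $0\le 1-\sign(x)\le 2$ and $g(x)=e^{-|x|}/2$. Since $e^{-|\cdot|}\in L^1(\R)$, this gives $G\in L^1(\R)$ with $\|G\|_{L^1}\le 1$. The map $y\mapsto G(c-y)$ is a reflection-plus-translation of $G$, which preserves the $L^1$ norm, so both $y\mapsto G(b-y)$ and $y\mapsto G(a-y)$ lie in $L^1(\R)$; hence so does their difference $S_{a,b}$, with $\|S_{a,b}\|_{L^1}\le 2\|G\|_{L^1}$ by the triangle inequality.

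The final assertion is the only one requiring a short computation, and it reduces to a case analysis on the signs of $b-y$ and $a-y$. If $y<a$ then $b-y>0$ and $a-y>0$, so both $G$-terms vanish and $S_{a,b}(y)=0\ge 0$. If $y>b$ then $b-y<0$ and $a-y<0$, so $G(b-y)=e^{b-y}$ and $G(a-y)=e^{a-y}$, giving $S_{a,b}(y)=e^{b-y}-e^{a-y}=e^{a-y}(e^{b-a}-1)>0$ because $b>a$. Together these two cases establish non-negativity on $(-\infty,a)\cup(b,+\infty)$, that is, outside the compact set $[a,b]$.

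Finally, I would remark that the restriction to the complement of $[a,b]$ is genuinely necessary and not an artifact of the argument: for $a<y<b$ one has $b-y>0$ but $a-y<0$, whence $S_{a,b}(y)=-e^{a-y}<0$. Thus there is no real obstacle here beyond keeping the sign bookkeeping straight; the only point worth flagging is this sharpness, which is precisely why the statement is phrased as non-negativity outside $[a,b]$ rather than everywhere on $\R$.
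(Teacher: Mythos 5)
Your proof is correct and takes essentially the same route as the paper, whose entire argument is the sign case analysis you give: $S_{a,b}(y)=0$ for $y<a$ and $S_{a,b}(y)=e^{a-y}\left(e^{b-a}-1\right)>0$ for $y>b$, with the $L^1$ membership left implicit (your explicit verification and your sharpness remark that $S_{a,b}(y)=-e^{a-y}<0$ on $(a,b)$ are correct additions). One tiny bookkeeping slip: the domination $|G(x)|\le e^{-|x|}$ only yields $\|G\|_{L^1(\R)}\le 2$, whereas the bound $\|G\|_{L^1(\R)}=1$ you state follows instead from the explicit form $G(x)=e^{x}$ for $x<0$ and $G(x)=0$ for $x>0$ --- harmless, since either bound suffices for the conclusion.
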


Henceforth we assume that $a$ and $b$ are distinct numbers, with $a<b$.

\begin{theorem}\label{teo4.3}
Suppose that for a given $t_0\in(0,T)$ there exists $[a,b]\subseteq\R$ such that $u_t(t_0,a)=u_t(t_0,b)=0$ and $u(t_0,x)=0$, $x\in[a,b]$. Then $u(t_0,x)=0$ for $x\geq a$.
\end{theorem}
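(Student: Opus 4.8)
The plan is to exploit the nonlocal form \eqref{4.0.2} of the equation together with the sign structure of the kernel difference $S_{a,b}$ recorded in Theorem \ref{teo4.2}. The central observation is that, on the interval where $u$ vanishes, \eqref{4.0.2} collapses to a direct identification of $u_t$ with the convolution term $F_{t_0}=G\ast u^2$; the hypotheses on $u_t$ at the two endpoints then pin down two values of $F_{t_0}$, and the representation \eqref{4.0.9} converts this into a vanishing integral against a one-signed weight, which forces $u$ to vanish to the right of $b$.

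First I would evaluate \eqref{4.0.2} at $t=t_0$ for $x\in[a,b]$. Since $u(t_0,\cdot)\equiv0$ on $[a,b]$ and $u$ is smooth, we also have $u_x(t_0,\cdot)\equiv0$ on $[a,b]$: the one-sided derivatives from inside the interval vanish, and smoothness makes them agree with $u_x$ at the endpoints $a$ and $b$. Hence the quadratic terms $u^2$ and $2uu_x$ drop out on $[a,b]$, and \eqref{4.0.2} reduces to
$$
u_t(t_0,x)=F_{t_0}(x),\qquad x\in[a,b].
$$
Evaluating at the two endpoints and using the hypotheses $u_t(t_0,a)=u_t(t_0,b)=0$, I obtain $F_{t_0}(a)=F_{t_0}(b)=0$, so that $F_{t_0}(b)-F_{t_0}(a)=0$.

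Next I would insert this into the representation \eqref{4.0.9}, which gives
$$
0=F_{t_0}(b)-F_{t_0}(a)=\int_{-\infty}^{+\infty}S_{a,b}(y)\,u(t_0,y)^2\,dy.
$$
I would then split the line according to the three regions analysed just before Theorem \ref{teo4.2}: for $y<a$ one has $S_{a,b}(y)=0$; for $y\in[a,b]$ one has $u(t_0,y)=0$; and for $y>b$ one has the strict positivity $S_{a,b}(y)=e^{a-y}(e^{b-a}-1)>0$. Therefore the whole integral equals $\int_b^{+\infty}S_{a,b}(y)\,u(t_0,y)^2\,dy$, an integral of a non-negative integrand whose weight is strictly positive on $(b,+\infty)$. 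Its vanishing forces $u(t_0,y)=0$ for all $y>b$, and together with the assumed vanishing on $[a,b]$ this yields $u(t_0,x)=0$ for every $x\geq a$, as claimed.

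The argument is short because the heavy lifting is done by Theorem \ref{teo4.2}; the step I would treat most carefully is the endpoint bookkeeping. One must be sure that the vanishing of $u$ on the \emph{closed} interval really does give $u_x(t_0,a)=u_x(t_0,b)=0$ — this is where smoothness of the solution is used — and one should note that the hypotheses $u_t(t_0,a)=u_t(t_0,b)=0$ are genuinely needed, since the spatial condition $u(t_0,\cdot)\equiv0$ on $[a,b]$ says nothing by itself about the time derivative and hence the two conditions are not redundant. The only analytic subtlety is the elementary but essential fact that the integral of a continuous non-negative function against a strictly positive weight can vanish only if the function itself vanishes throughout the region of integration.
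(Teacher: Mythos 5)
Your proof is correct and follows essentially the same route as the paper: identify $F_{t_0}=u_t+u^2-2uu_x$ via \eqref{4.0.2}, deduce $F_{t_0}(a)=F_{t_0}(b)=0$ from the hypotheses, and conclude from the representation \eqref{4.0.9} and the sign analysis of $S_{a,b}$ in Theorem \ref{teo4.2}. One minor simplification: your endpoint bookkeeping for $u_x$ is superfluous, since both $u^2$ and $2uu_x$ carry a factor of $u$, which already vanishes at $a$ and $b$ by hypothesis.
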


\begin{proof} It suffices to show that $u(t_0,x)=0$ for $x>b$.

From \eqref{4.0.8} and \eqref{4.0.2} we conclude that
$$
F_t(x)=(u_t+u^2-2uu_x)(t,x).
$$
Under the conditions in the theorem, we have $F_{t_0}(a)=F_{t_0}(b)=0$. From the representation \eqref{4.0.9} we have
\bb\label{4.0.11}
\int_{-\infty}^{+\infty}S_{a,b}(y)u(t_0,y)^2dy=F_{t_0}(b)-F_{t_0}(a)=0.
\ee
Since $S_{a,b}(y)u(t_0,y)=0$ for $x\leq b$, and $S_{a,b}(y)>0$ for $y>b$, condition \eqref{4.0.11} implies that $u(t_0,x)=0$ for $x>b$.
\end{proof}

Since we know that \eqref{1.0.1} admits solutions conserving the $L^1(R)$ norm, we can use our precedent result to establish the following unique continuation result.

\begin{corollary}\label{cor4.1}
If $u$ is a solution for \eqref{1.0.1} such that $\|u(t,\cdot)\|_{L^1(\R)}$ is invariant and there exists an open set $\Omega=(T_0,T_1)\times(-\infty,b)\subseteq(0,T)\times\R$, for some $b\in\R$, such that $u$ restricted to $\Omega$ vanishes identically. Then $u\equiv0$.
\end{corollary}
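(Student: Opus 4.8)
The plan is to reduce the whole statement to a single time slice and then let the invariance of the $L^1$ norm propagate the vanishing in time. First I would fix an arbitrary $t_0\in(T_0,T_1)$ and exploit that $u$ vanishes identically on the open set $\Omega=(T_0,T_1)\times(-\infty,b)$: since $\Omega$ is open in $(0,T)\times\R$ and $u\equiv0$ there, every partial derivative of $u$ vanishes on $\Omega$ as well. In particular, for each $x<b$ we have simultaneously $u(t_0,x)=0$ and $u_t(t_0,x)=0$.

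Next I would bring Theorem \ref{teo4.3} into play. Choose two points $a<b'$ with $b'<b$. By the previous paragraph, $u_t(t_0,a)=u_t(t_0,b')=0$ and $u(t_0,x)=0$ for every $x\in[a,b']$, so the hypotheses of Theorem \ref{teo4.3} are met on the interval $[a,b']$. Its conclusion then yields $u(t_0,x)=0$ for all $x\geq a$. Combining this with the already-known vanishing $u(t_0,x)=0$ for $x<b$, and recalling that $a<b$, the two half-lines $(-\infty,b)$ and $[a,\infty)$ cover $\R$; hence $u(t_0,\cdot)\equiv0$ on the whole line.

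Finally I would invoke the standing hypothesis that $\|u(t,\cdot)\|_{L^1(\R)}$ is invariant in $t$. Since $u(t_0,\cdot)\equiv0$, we have $\|u(t_0,\cdot)\|_{L^1(\R)}=0$, and invariance forces $\|u(t,\cdot)\|_{L^1(\R)}=0$ for every $t\in(0,T)$. Thus $u(t,\cdot)=0$ almost everywhere for each $t$, and by the smoothness (hence continuity) of the solution, $u\equiv0$ on $(0,T)\times\R$, which is the desired claim.

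I expect the only delicate point to be the bookkeeping in the middle step: Theorem \ref{teo4.3} is one-sided, propagating the zero set only to the right ($x\geq a$), so it must be paired with the given vanishing on $(-\infty,b)$ in order to cover all of $\R$. Selecting $a$ and $b'$ strictly inside $(-\infty,b)$ is exactly what guarantees the hypotheses $u_t(t_0,a)=u_t(t_0,b')=0$ genuinely hold, since those derivatives are being evaluated inside the open set $\Omega$ where $u$ is identically zero. Everything after that is immediate once the $L^1$ conservation is available, and that conservation is precisely the feature distinguishing this argument from a well-posedness-based unique-continuation proof.
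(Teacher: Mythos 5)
Your proof is correct and follows essentially the same route as the paper: fix $t_0\in(T_0,T_1)$, apply Theorem \ref{teo4.3} on an interval inside $(-\infty,b)$ to conclude $u(t_0,\cdot)\equiv0$ on all of $\R$, then invoke the invariance of $\|u(t,\cdot)\|_{L^1(\R)}$ to propagate the vanishing to every $t\in(0,T)$. Your choice of $b'<b$ strictly inside $\Omega$ is a slightly more careful version of the paper's use of the interval $[a,b]$ (which implicitly relies on continuity of $u$ and $u_t$ at the endpoint $x=b$), but the argument is the same.
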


\begin{proof}
Let $t_0\in(T_0,T_1)$ and $a\in\R$ such that $a<b$. Then the solution $u$ and the set $\{t_0\}\times[a,b]$ satisfies the conditions in Theorem \ref{teo4.3}. As a result we know that $u(t_0,x)=0$ for all $x\geq a$, which implies that $u(t_0,x)=0$, $x\in\R$. The invariance of the $L^1(\R)$ norm implies that $u(t,x)=0$ for all $(t,x)\in(0,T)\times\R$.
\end{proof}

We observe that the result in Corollary \ref{cor4.1} does not require the uniqueness of the solution, but imposes the conservation of the $L^1(\R)$ norm of $u$. Corollary \ref{cor4.1}'s demonstration shows that only one solution of \eqref{1.0.1} can satisfy its condition, meaning that it can also be seen as a uniqueness result. We can change its perspective and abandon the hypothesis of the conservation of the $L^1(\R)$ norm (which can be interpreted as mass conservation) and prove an alternative unique continuation result.

\begin{theorem}\label{teo4.4}
Assume that $u\in H^s(\R)$, for $s$ sufficiently large, is a solution of \eqref{1.0.1} vanishing on an open set $\Omega$ as in corollary \ref{cor4.1}. Then $u\equiv0$.
\end{theorem}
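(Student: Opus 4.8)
The plan is to prove Theorem \ref{teo4.4} in two stages, replacing the mass conservation used in Corollary \ref{cor4.1} by the well-posedness of Lemma \ref{lema4.1}. In the first stage I would upgrade the hypothesis ``$u$ vanishes on $\Omega=(T_0,T_1)\times(-\infty,b)$'' to ``$u$ vanishes on the full strip $(T_0,T_1)\times\R$''. In the second stage I would propagate this vanishing to the whole existence interval $(0,T)$ by a connectedness argument powered by uniqueness of the Cauchy problem.

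For the first stage I would repeat the opening of the proof of Corollary \ref{cor4.1}. Fix $t_0\in(T_0,T_1)$ and any $a<b$. Because $u\equiv0$ on the open set $\Omega$ and (taking $s$ large) $u$ is smooth, all derivatives of $u$ vanish throughout $\overline{\Omega}$; in particular $u(t_0,\cdot)\equiv0$ on $[a,b]$ and $u_t(t_0,a)=u_t(t_0,b)=0$. Theorem \ref{teo4.3} then gives $u(t_0,x)=0$ for every $x\geq a$, and together with the vanishing of $u(t_0,\cdot)$ on $(-\infty,b)$ this yields $u(t_0,\cdot)\equiv0$ on $\R$. Since $t_0\in(T_0,T_1)$ was arbitrary, $u$ vanishes identically on $(T_0,T_1)\times\R$.

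For the second stage I would set $E=\{t\in(0,T):u(t,\cdot)\equiv0\}$ and show $E=(0,T)$. The set $E$ is nonempty (it contains $(T_0,T_1)$) and closed in $(0,T)$, the latter because $t\mapsto u(t,\cdot)$ is continuous into $H^s(\R)$. To see that $E$ is open, fix $t_0\in E$. As \eqref{1.0.1} is autonomous, the translate $(t,x)\mapsto u(t+t_0,x)$ solves the Cauchy problem \eqref{4.0.4} with zero initial datum; since the zero function solves the same problem, the uniqueness in Lemma \ref{lema4.1} forces $u\equiv0$ on $[t_0,t_0+\delta)$ for some $\delta>0$. For a left neighbourhood I would invoke the discrete time-reversal symmetry $u(t,x)\mapsto -u(2t_0-t,x)$ of \eqref{1.0.1}: its right-hand side is homogeneous quadratic in $u$ and its $x$-derivatives, hence invariant under $u\mapsto-u$, and the two sign changes it induces on the left-hand side cancel, so the reflected function is again a solution and vanishes at $t_0$. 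Applying the forward argument to it gives $u\equiv0$ on $(t_0-\delta,t_0]$. Thus $E$ is open, and connectedness of $(0,T)$ forces $E=(0,T)$, that is, $u\equiv0$.

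The main obstacle is the backward-in-time half of the openness step, since Lemma \ref{lema4.1} is phrased only as a forward well-posedness statement and does not by itself grant uniqueness for $t<t_0$. I expect the time-reversal symmetry above to be the right device, and I would verify in detail that $u(t,x)\mapsto -u(2t_0-t,x)$ genuinely maps solutions of \eqref{1.0.1} to solutions and that the given $u$ lies in the uniqueness class of Lemma \ref{lema4.1} near each $t_0$; this last point is exactly where the assumption that $s$ is sufficiently large is needed.
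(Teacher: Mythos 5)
Your proposal is correct, and its core coincides with the paper's own proof: fix $t_0\in(T_0,T_1)$, use Theorem \ref{teo4.3} to conclude $u(t_0,\cdot)\equiv0$ on $\R$, then exploit that \eqref{1.0.1} is autonomous (the generator $X_2$ in Theorem \ref{teo2.1}) to translate in time and invoke Lemma \ref{lema4.1} with zero initial datum. The difference is in how the vanishing is propagated from $t_0$. The paper argues only forward: it applies the estimate \eqref{4.0.5} to $v(t,x)=u(t+t_0,x)$, obtaining $\|v(t,\cdot)\|_{H^s(\R)}\leq 2\|u(t_0,\cdot)\|_{H^s(\R)}=0$ for $t\in(0,T^\ast)$, and then notes $T^\ast=\infty$; it is silent about $t\in(0,t_0)$, so your backward-in-time step is a genuine addition that completes a direction the paper leaves implicit. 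Your device for it checks out: setting $w(t,x)=-u(2t_0-t,x)$, one has $w_t-w_{txx}=(u_t-u_{txx})\big|_{(2t_0-t,x)}$, while the right-hand side of \eqref{1.0.1} is homogeneous quadratic in $(u,u_x,u_{xx},u_{xxx})$ and contains no $t$-derivatives, hence is invariant under $u\mapsto-u$ and under the reflection; so $w$ is again a solution, lies in the same $C^0$-in-time $H^s(\R)$ class (reflection preserves regularity and decay), vanishes at time $t_0$, and forward uniqueness applied to $w$ yields $u\equiv0$ on $(t_0-\delta,t_0]$. Your open/closed connectedness argument on $E=\{t\in(0,T):u(t,\cdot)\equiv0\}$ then gives the conclusion; where the paper instead uses the bound \eqref{4.0.5} to force the translated solution to vanish, you compare against the zero solution via uniqueness, a cosmetic variant. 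The two points you flagged as needing verification are indeed the right ones, and both are routine: the reflected function stays in the uniqueness class of Lemma \ref{lema4.1} for $s>3/2$, and with zero datum the lifespan restriction $T\leq 1/(c_s\|u_0\|_{H^s(\R)})$ in \eqref{4.0.5} is vacuous, so the zero solution persists on the whole interval, exactly as the paper observes when it concludes $T^\ast=\infty$.
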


\begin{proof}
Choosing $t_0\in(T_0,T_1)$ and using theorem \ref{teo4.3} once again, we conclude that $u(t_0,x)=0$ for $x\geq a$, for some (in fact, all) $a<b$. This implies that $u(t_0,x)=0$. Let us assume $s>3/2$ and consider $v(t,x):=u(t+t_0,x)$. By theorem \ref{teo2.1} $v$ is also a solution of \eqref{1.0.1} subject to $v(0,x)=u(t_0,x)=0$. By lemma \ref{lema4.1} we have a unique solution defined for any $t\in[0,T^\ast)$, where $T^\ast=T-t_0$. 

By \eqref{4.0.5}, for any $t\in(0,T^\ast)$ we have
$$
\|v(t,\cdot)\|_{H^s(\R)}\leq2\|v(0,\cdot)\|_{H^s(\R)}=2\|u(t_0,\cdot)\|_{H^s(\R)}=0.
$$

Moreover, using \eqref{4.0.5} once more, we conclude that $T^\ast=\infty$ and, therefore, we have a unique (global) solution.
\end{proof}

\section{Pseudo-spherical surfaces}\label{sec5}

A two-dimensional manifold ${\cal M}$ is called a pseudo-spherical surface (PSS) with constant Gaussian curvature ${\cal K}=-1$ if there exists a triad of one-forms $\omega_1,\,\omega_2,\,\omega_3$ on ${\cal M}$ satisfying \eqref{1.0.4}, which are structure equations endowing ${\cal M}$ with the metric $g=\omega_1^2+\omega_2^2$. For further details, see \cite{chern,rodrigo,reyes2011}.

A differential equation with two independent variables is said to be of a pseudo-spherical type, or describe a pseudo-spherical surface, if there exists one-forms \eqref{1.0.2}, whose coefficients are functions depending on the independent and dependent variables, and derivatives of the latter, such that \eqref{1.0.4} is satisfied and $\omega_1\wedge\omega_2\neq0$. If so, the domain of the solution, endowed with \eqref{1.0.5}, is a PSS.  

Recently, Freire and Tito showed that \eqref{1.0.1} describes pseudo-spherical surfaces. More precisely, they proved that \eqref{1.0.2}--\eqref{1.0.3} satisfy \eqref{1.0.4} on the solutions of \eqref{1.0.1}. Solutions of an equation describing pseudo-spherical surfaces satisfying $\omega_1\wedge\omega_2\neq0$ are called {\it generic} solutions.

\begin{remark}\label{rem5.1} In \cite{rodrigo} it was shown that the only non-generic solutions of \eqref{1.0.1} are:
\begin{itemize}
    \item For $m_1=-2$, then $\phi(t,x)=\pm\sqrt{ae^{-x}+b}$;
    \item For $m_1=1$, then $\phi(t,x)=\pm\sqrt{ae^{2x}+b}$ or $\phi(t,x)=f(t)e^{x}$.
\end{itemize}
Above, $a$ and $b$ are arbitrary constants, whereas $f(t)$ is a smooth real valued function.
\end{remark}

We now classify the non-generic solutions that are invariant under the fluxes generated by \eqref{2.0.4}. To this end, we proceed as follows: let $X$ denote a vector field \eqref{2.0.4}. We then impose that the non-generic solutions above satisfy the condition
$$
X(u-\phi)\Big|_{u=\phi}=0.
$$

For convenience, we shall only consider the sign $+$ in the solutions above. The sign $-$ is treated similarly and gives the same results. In what follows, $a$, $b$ and $c_1$ denote constants, whereas $f(\cdot)$ is a smooth function. Moreover, we assume that the solution is not identically zero.

\begin{enumerate}
    \item Let us consider $m_1=-2$ and $\phi=\sqrt{ae^{-x}+b}$. Then:
    \begin{enumerate}
        \item The condition $X_1(u-\phi)\Big|_{u=\phi}=0$ implies that $a=0$ and then $u=c_1$.
        
        \item The condition $(\al X_1+X_3)(u-\phi)\Big|_{u=\phi}=0$ gives
        $$
        (\al-2)ae^{-x}-2b=0,
        $$
       whose solution is \eqref{2.0.8}.
       
       \item The condition $(c X_1+X_2)(u-\phi)\Big|_{u=\phi}=0$ implies $a=0$ or $c=0$. If $a=0$ then the solution is constant, whereas whenever $c=0$ we obtain $u(t,x)=\sqrt{ae^{-x}+b}$.
    \end{enumerate}
    
    \item Let us consider $m_1=1$ and $\phi=\sqrt{ae^{2x}+b}$. Then:
    \begin{enumerate}
        \item The condition $X_1(u-\phi)\Big|_{u=\phi}=0$ implies that $a=0$ and then $u$ is a constant.
        
        \item The condition $(\al X_1+X_3)(u-\phi)\Big|_{u=\phi}=0$ gives
        $$
        (\al-1)ae^{2x}-b=0,
        $$
       that is, $(\al-1)a=0$ and $b=0$. A non-trivial solution can only be obtained when $\al=1$, and in this case we obtain \eqref{2.0.7}.
        
       \item The condition $(c X_1+X_2)(u-\phi)\Big|_{u=\phi}$ again implies $a=0$ or $c=0$. If $a=0$ have constants as solutions, whereas $c=0$ gives $u(t,x)=\sqrt{ae^{2x}+b}$.
    \end{enumerate}
    
    \item Let us consider $m_1=1$ and $\phi=f(t)e^{x}$. Then:
    \begin{enumerate}
        \item The condition $X_1(u-\phi)\Big|_{u=\phi}=0$ implies $f(t)=0$, which is a contradiction. This means that we do not have any  non-generic solution of the form $\phi=f(t)e^{x}$ invariant under translations $(t,x,u)\mapsto(t,x+\epsilon,u)$.
        
        \item The condition $(\al X_1+X_3)(u-\phi)\Big|_{u=\phi}=0$ gives
        $$
        f(t)=\f{c_1}{t^{\al+1}}
        $$
        and we then recover \eqref{2.0.9}.
       
       \item The condition $(c X_1+X_2)(u-\phi)\Big|_{u=\phi}=0$ says that $f(t)=ae^{-ct}$ and we re-obtain \eqref{2.0.16}.
    \end{enumerate}
\end{enumerate}
The solutions above are the only invariant solutions (considering the fluxes generated by the fields \eqref{2.0.4}) that do not endow the domains of the solutions of \eqref{1.0.1} with a metric with Gaussian curvature ${\cal K}=-1$.

Let us use the invariant solutions found before to construct explicit metrics for PSS surfaces. For example, if $m_1=-2$ and considering $u(t,x)=e^x/t$, we have the forms
\bb\label{5.0.1}
\omega_1=-6\f{e^{2x}}{t^2}dt,\,\,\omega_2=\mp 2\sqrt{1+\mu^2}dx-6\mu\f{e^{2x}}{t^2}dt,\,\,\omega_3=-2\mu dx\mp 6\sqrt{1+\mu^2}\f{e^{2x}}{t^2}dt,
\ee
which yields the metric
\bb\label{5.0.2}
g=4(1+\mu^2)dx^2\pm 24\mu\sqrt{1+\mu^2}\f{e^{2x}}{t^2}dxdt+36(1+\mu^2)\f{e^{4x}}{t^4}dt^2,
\ee
and a simple calculation shows that
$$
\omega_1\wedge\omega_2=\mp 12\sqrt{1+\mu^2}\f{e^{2x}}{t^2}dx\wedge dt=d\omega_3,
$$
meaning that \eqref{5.0.1} really gives a PSS with metric \eqref{5.0.2}. 

On the other hand, the case $m_1=1$ and the stationary solution $u(t,x)=a e^{-x/2}$, $a\neq0$, provide the forms
\bb\label{5.0.3}
\left.\begin{aligned}
\omega_1=\f{3}{4}ae^{-x/2}dx-3a^2e^{-x}dt,\,\,\omega_2=\Big(\f{3}{4}a\mu e^{-x/2}\pm \sqrt{1+\mu^2}\Big)dx-3a^2\mu e^{-x}dt,\\
\omega_3=\Big(\pm\f{3}{4}a\sqrt{1+\mu^2}e^{-x/2}+\mu\Big) dx\mp3a^2\sqrt{1+\mu^2}e^{-x}dt,
\end{aligned}\right.
\ee
that satisfies
$$\omega_1\wedge\omega_2=\pm 3a^2\sqrt{1+\mu^2}e^{-x}dx\wedge dt=d\omega_3,$$
and gives the metric
\bb\label{5.0.4}
\left.\begin{aligned}
   g&=\Big[\Big(\f{9}{16}a^2e^{-x}+1\Big)(1+\mu^2)\pm\f{3}{2}a\mu\sqrt{1+\mu^2}e^{-x/2}\Big]dx^2\\
   &-\Big[\f{9}{2}a^3(1+\mu^2)e^{-3x/2}\pm 6a^2\mu\sqrt{1+\mu^2}e^{-x}\Big] dxdt+9a^4(1+\mu^2)e^{-2x}dt^2, 
\end{aligned}\right.
\ee

We observe that the one-parameter family of metrics \eqref{5.0.2} and \eqref{5.0.4} are analytic on certain domains of $\R^2$. However, none of them come from conservative solutions. More generally, observe that only non-generic solutions of \eqref{1.0.1} are those characterised by Remark \ref{rem5.1}, and none of them belong to $L^1(\R)$. Lemma \ref{lema4.2} combined with theorem \ref{teo4.1} are key ingredients for the next existence result.

\begin{theorem}{\tt (Existence of analytic pseudo-spherical surface)}
Let $u_0\in H^s(\R)$ be an analytic data such that $m_0=u_0-u_0''>0$. Then, for $s$ sufficiently large, there exists $\epsilon>0$ such that the solutions of \eqref{1.0.1} endows the strip ${\cal S}=(-\epsilon,\epsilon)\times\R$ with the metric giving by \eqref{1.0.5}, with the following properties:
\begin{itemize}
    \item $g_{ij}$ are analytic, $1\leq i,j\leq2$;
    \item $({\cal S},g)$ is a surface with constant curvature;
    \item $\lim\limits_{x\rightarrow\pm\infty}g_{11}>0$, and $g_{i2}\rightarrow0$ as $|x|\rightarrow\infty$.
\end{itemize}
Furthermore, $u\neq0$ everywhere on ${\cal S}$ and conserves the $L^1(\R)$ norm.
\end{theorem}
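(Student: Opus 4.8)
The plan is to assemble the theorem from the machinery already in place: Lemma \ref{lema4.2} supplies an analytic solution on a strip, Theorem \ref{teo4.1} (more precisely its proof via the flow \eqref{4.0.6}) guarantees positivity of $m$ and hence of $u$, and the explicit metric \eqref{1.0.5} together with the conservation law \eqref{3.0.4} furnishes the remaining geometric and $L^1$ claims. First I would fix an analytic datum $u_0\in H^s(\R)$ with $m_0=u_0-u_0''>0$ and invoke Lemma \ref{lema4.2} to obtain $\epsilon>0$ and a solution $u(t,x)$ analytic in both variables on ${\cal S}=(-\epsilon,\epsilon)\times\R$. Analyticity of $u$ immediately propagates, by composition and the fact that $u_{xx}$ is again analytic, to every entry $f_{ij}$ in \eqref{1.0.3} and thus to every $g_{ij}$ in \eqref{1.0.5}; this settles the first bullet.

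Next I would establish $u\neq0$ everywhere on ${\cal S}$. Running the characteristic flow argument from the proof of Theorem \ref{teo4.1}, the hypothesis $m_0>0$ gives $m(t,q(t,x))\ge m_0(x)\,\exp(-2\int_0^t(u-3u_x)ds)>0$ via \eqref{4.0.7}, and since $q(t,\cdot)$ is an increasing diffeomorphism we get $m(t,\cdot)>0$ on all of ${\cal S}$. Because $u=g\ast m$ with the strictly positive kernel $g(x)=e^{-|x|}/2$, strict positivity of $m$ forces $u>0$ pointwise; in particular $u\neq0$ everywhere, which is exactly what is needed for $\omega_1\wedge\omega_2\neq0$, i.e.\ for \eqref{1.0.5} to be a genuine PSS metric. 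For the constant-curvature bullet, the one-forms \eqref{1.0.2}--\eqref{1.0.3} satisfy the structure equations \eqref{1.0.4} on any solution of \eqref{1.0.1} (this is the result of \cite{rodrigo} recalled in the introduction), so $({\cal S},g)$ is automatically a surface of constant Gaussian curvature ${\cal K}=-1$ wherever $\omega_1\wedge\omega_2\neq0$, which we have just guaranteed.

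For the asymptotic bullet I would read off the leading term of $g_{11}$ from \eqref{1.0.5}: since $u$ and its derivatives decay as $|x|\to\infty$ (Theorem \ref{teo4.1}), the factor $u-u_{xx}=m\to0$, so $f_{11}\to0$ and $f_{21}\to\pm m_1\sqrt{1+\mu^2}$, whence $g_{11}=f_{11}^2+f_{21}^2\to m_1^2(1+\mu^2)>0$; for the mixed and $tt$ components the common factor $2u(u-u_{xx})+\psi$ involves only products that decay, so $g_{12},g_{22}\to0$, giving $g_{i2}\to0$. Finally, positivity of $u$ makes \eqref{3.0.4} literally the $L^1(\R)$ norm, and the conservation-law computation of Theorem \ref{teo3.1} (combined with the decay hypotheses assumed throughout section \ref{sec4}) shows ${\cal H}_1(t)$ is constant, so $\|u(t,\cdot)\|_{L^1(\R)}$ is invariant on ${\cal S}$.

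The main obstacle I anticipate is the compatibility of hypotheses rather than any single hard estimate: Lemma \ref{lema4.2} yields analyticity only on a (possibly small) time interval $(-\epsilon,\epsilon)$, while the positivity argument of Theorem \ref{teo4.1} was phrased for $t\in[0,T)$, so one must check that the same analytic datum simultaneously satisfies the Sobolev regularity $s>3/2$ required for the flow \eqref{4.0.6} to be well-posed and that $\epsilon$ can be taken small enough to lie inside both existence windows. I would handle this by choosing $s$ large, taking $\epsilon$ to be the minimum of the analytic-existence time and the lifespan from Lemma \ref{lema4.1}, and noting that the flow argument is symmetric in $t$, so positivity holds on the two-sided strip. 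The only delicate verification is the strict inequality $g_{11}>0$ in the limit, but this follows from the explicit constant $m_1^2(1+\mu^2)$ being strictly positive for $m_1\in\{-2,1\}$ and any $\mu\in\R$.
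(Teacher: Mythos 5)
Your proposal follows the paper's proof almost step for step: Lemma \ref{lema4.2} for analyticity on the strip, the characteristic flow \eqref{4.0.6} and estimate \eqref{4.0.7} from Theorem \ref{teo4.1} for $m>0$ and hence $u=g\ast m>0$, Theorem \ref{teo3.2} for conservation of the $L^1(\R)$ norm, and the same limit computation giving $g_{11}\rightarrow m_1^2(1+\mu^2)>0$ and $g_{i2}\rightarrow 0$. Your extra care about running the flow argument on a two-sided time interval is a point the paper passes over silently, and is welcome. However, there is one genuine gap: the claim that $u\neq0$ everywhere ``is exactly what is needed for $\omega_1\wedge\omega_2\neq0$'' is false. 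From \eqref{1.0.3} one has $f_{22}=\mu f_{12}$, so $\omega_1\wedge\omega_2=(f_{11}f_{22}-f_{12}f_{21})\,dx\wedge dt=\mp m_1\sqrt{1+\mu^2}\,f_{12}\,dx\wedge dt$; genericity is therefore the non-vanishing of $f_{12}=2u(u-u_{xx})+\frac{4}{m_1}uu_x-2u_x^2-2u^2$, not of $u$. Strictly positive non-generic solutions exist: any constant $u=a>0$ gives $f_{12}=2a^2-2a^2=0$, and likewise $u=\sqrt{ae^{-x}+b}$ with $a,b>0$ (the case $m_1=-2$ of Remark \ref{rem5.1}) is positive yet non-generic. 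So positivity of $u$ alone does not deliver the pseudo-spherical structure.

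The paper closes this step differently, and you must too: by Remark \ref{rem5.1} the non-generic solutions of \eqref{1.0.1} are completely classified, and none of them decays to zero at both spatial infinities (none belongs to $L^1(\R)$), whereas the solution you construct lies in $H^s(\R)$ with $s$ large (so $u,u_x,u_{xx}\rightarrow0$ as $|x|\rightarrow\infty$) and is a positive, $L^1$-conservative solution. Hence $u$ cannot coincide with any solution in the classification, so it is generic, and only then does \eqref{1.0.5} endow ${\cal S}$ with a metric of constant Gaussian curvature ${\cal K}=-1$. With that substitution your argument coincides with the paper's proof; everything else in your write-up --- analyticity of the $g_{ij}$ by composition, the asymptotics of the metric coefficients, and taking $\epsilon$ inside both existence windows --- is sound.
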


\begin{proof} We first note that if $u,\,u_x,\,u_{xx}\rightarrow0$ as $|x|\rightarrow\infty$, then $g_{11}\rightarrow m_1^2(1+\mu^2)>0$, and $g_{12},g_{22}\rightarrow0$. 

Let $s>3/2$ large enough, so that theorem \ref{teo4.1} and lemma \ref{lema4.2} hold. By lemma \ref{lema4.2} we conclude the existence of $\epsilon>0$ so that $u\big|_{\cal S}$ is analytic. Due to this fact, from \eqref{1.0.5} we conclude that the metric coefficients are all analytic. Moreover, since $m_0>0$, \eqref{4.0.7} implies that $m(t,q(t,x))>0$ for all $t\in(-\epsilon,\epsilon)$ and $x\in\R$. Taking into account that $q(t,\cdot)$ is a diffeomorphism, we conclude that $m(\cdot,q(\cdot,\cdot))\big|_{{\cal S}}$ does not vanish and $u=g\ast m$ either. Moreover, $u$ cannot be any of the solutions in Remark \ref{rem5.1}, is strictly positive on ${\cal S}$, and conserves the $L^1(\R)$ norm in view of theorem \ref{teo3.2}. 

Since \eqref{1.0.1} is of pseudo-spherical type, we can endow ${\cal S}$ with the metric \eqref{1.0.5}. As a consequence, $({\cal S},g)$ has Gaussian curvature ${\cal K}=-1$ at every point.
\end{proof}

\section{Discussion}\label{sec6}

As previously mentioned, in \cite{nov} it was carried out a classification of symmetry integrable equations of the type $u_t-u_{txx}=F$, where $F$ is a polynomial in $u$ and its spatial derivatives. Equation \eqref{1.0.1} is one of the equations obtained by Novikov, among other more famous and well studied like the Camassa-Holm, Degasperis-Procesi and Novikov equations.

In our work we studied the equation \eqref{1.0.1} mainly from the point of view of group analysis \cite{2ndbook,hydon,i1,olverbook}, that is, we found its invariance group and invariant solutions in section \ref{sec2} and conservation laws and conserved quantities in section \ref{sec3}.

To the best of our knowledge, the only equations discovered by Novikov whose Lie point symmetries are known are just the equations mentioned above: for the CH and DP equations, they can be found in \cite{clark}, while for the Novikov equation they are classified in \cite{boz}, see also \cite{anco,igor-cm}. Note that the work by Clarkson precedes the one by Novikov as well as the discovery of the DP equation \cite{depro}. The only equation properly discovered by Novikov with known symmetries is just the equation named after him, and \eqref{1.0.1}, due to the present work.

Starting from the Lie symmetries, we went further and studied different topics using them as a start point. Namely, we found some explicit $C^\infty$ solutions for the equation in section \ref{sec2} and showed which of them give rise to PSS.

Still related with symmetries, we also established conservation laws for \eqref{1.0.1}. The conserved current \eqref{3.0.1} can be easily inferred from the fact that \eqref{1.0.1} itself is a conservation law. In \cite{rodrigo} it was shown infinitely many conserved currents for the equation. However, most of them are trivial or non-local ones, which in practical terms does not bring much information for the equation, despite the fact that they reinforces the integrability of \eqref{1.0.1}.

The conserved currents \eqref{3.0.2} and \eqref{3.0.3} are very new. They show that for solutions conserving \eqref{3.0.5}, then necessarily $u+2u_x+u_{xx}$ decreases faster then $o(e^{2x})$ as $x\rightarrow-\infty$, whereas those conserving \eqref{3.0.6} imply that $m(t,x)$ decays to $0$ higher than $o(e^{-x})$ as $x\rightarrow+\infty$.

From the point of view of qualitative properties, we prove the existence of positive solutions for \eqref{1.0.1}. A similar result was previously proved in \cite[Section 5]{li-na}, but our demonstration is fairly different. Compare the demonstration of theorem \ref{teo4.1} with \cite[Lemma 5.6]{li-na}. 

It is worth mentioning that we do not need our theorem \ref{teo4.4} to assure the existence of positive solutions for \eqref{1.0.1}. This can easily be inferred from the invariant solutions. For example, consider \eqref{2.0.7}, \eqref{2.0.8} or \eqref{2.0.16} with $a>0$, or \eqref{2.0.11} for $t>0$. These solutions, however, do not have \eqref{3.0.4} as a conserved quantity. The relevance of theorem \ref{teo4.1} relies on the fact that not only it implies the existence of spatially bounded solutions, but also $L^1(\R)$ conservative ones.

For non-negative solutions $u(t,x)$ of \eqref{1.0.1} leaving $t\mapsto \|u(t,\cdot)\|_{L^1(\R)}$ invariant we can prove unique continuation properties as in corollary \ref{cor4.1}. This can be seen as an energy conservation functional approach, as suggested in \cite{igor-jpa,freire-cor}, which is more related to physical properties of the equation, see the discussions presented in \cite{igor-jpa}.

In theorem \ref{teo4.4} we arrive at the same conclusion of corollary \ref{cor4.1} replacing the conservation of the $L^1(\R)$ norm (which can be interpreted as a mass conservation) by the uniqueness of the solution of a Cauchy problem \eqref{4.0.4} and the continuity of the data to solution map as well, as proposed in \cite{linares}.

In regard to geometric aspects of \eqref{1.0.1}, in \cite{rodrigo} it was shown that generic solutions of \eqref{1.0.1} give rise to pseudo-spherical surfaces with Gaussian curvature ${\cal K}=-1$. Moreover, the authors classified all non-generic solutions. Our paper complements the aforementioned reference by showing the invariant solutions that are non-generic and the existence of an analytical pseudo-spherical surface. In \cite[Remark 2]{rodrigo} it was mentioned by {\it en passant} that \eqref{1.0.1} would describe analytical PSS, but such a fact was not proved nor better characterised. Our paper not only rigorously prove the existence of such PSS, but also gives a complete description of the behaviour of the metric for large values of $|x|$, at least for solutions arising from the conditions in lemma \ref{lema4.2} (see \cite[Theorem 1.3]{liu-jde}). Our demonstration, in particular, only guarantees the existence of an analytical PSS for solutions conserving the $L^1(\R)$ norm.

\section{Conclusion}\label{sec7}

In this work we found the Lie point symmetries of the equation \eqref{1.0.1}. From them we classified their optimal system of algebras and obtained explicit group-invariant solutions, as shown in section \ref{sec2}. We established three conservation laws for \eqref{1.0.1}, given in \ref{sec3}, as well as their corresponding conserved quantities. Moreover, we proved the existence of positive solutions for \eqref{1.0.1} as well as unique continuation results for its solutions. Finally, we found explicit pseudo-spherical surfaces determined by the solutions of \eqref{1.0.1} and proved the existence of analytical surfaces with constant Gaussian curvature ${\cal K}=-1$ arising from positive $L^1(\R)$ conservative solutions of \eqref{1.0.1}.

\section*{Data Availability Statement}
The authors declare that data sharing is not applicable to this article as no data sets were generated or analysed during the current study.

\section*{Conflict of Interest Statement}
The authors declare that there is no conflict of interest.

\section*{Acknowledgements}

The first author is thankful to UFMT for the leaving to develop his PhD at UFABC. I. L. Freire thanks FAPESP for financial support (grant nº 2020/02055-0).

\end{document}